\theoremstyle{plain}
\newtheorem{theorem}{Theorem}[section]
\newtheorem{corollary}[theorem]{Corollary}
\newtheorem{lemma}[theorem]{Lemma}
\newtheorem{definition}[theorem]{Definition}
\def\ep{\varepsilon}
\newcommand{\sg}{\sigma}
\begin{document}
\title[tracially nuclear dimensional]{{\bf certain  tracially nuclear dimensional for certain crossed product ${\rm C^*}$-algebras}}
\author{Qingzhai Fan and  Jiahui Wang}

\address{Qingzhai Fan\\ Department of Mathematics\\  Shanghai Maritime University\\
Shanghai\\China
\\  201306 }

\address{Jiahui  Wang\\ Department of Mathematics\\ Shanghai Maritime University\\
Shanghai\\China
\\  201306 }
\email{641183019@qq.com}

\thanks{{\bf Key words}  ${\rm C^*}$-algebras, tracial approximation, tracially $\mathcal{Z}$-absorbing.}

\thanks{2000 \emph{Mathematics Subject Classification\rm{:}} 46L35, 46L05, 46L80}
\begin{abstract} Let $\Omega$ be a class of unital ${\rm C^*}$-algebras which have the second type tracial nuclear dimensional at moat $n$ (or have  tracial nuclear dimensional at most $n$). Let $A$ be an infinite dimensional  unital simple  ${\rm C^*}$-algebra such that  $A$ is asymptotical tracially in $\Omega$. Then ${\rm T^2dim_{nuc}}(A)\leq n$  (or ${\rm Tdim_{nuc}}(A)\leq n$).
 As an application, let $A$ be an infinite dimensional  simple separable amenable unital
 ${\rm C^*}$-algebra with ${\rm T^2dim_{nuc}}(A)\leq n$ (or ${\rm Tdim_{nuc}}(A)\leq n$).
Suppose that  $\alpha:G\to {\rm Aut}(A)$ is  an action of a finite group $G$ on $A$
 which has the  tracial Rokhlin property.  Then ${\rm T^2dim_{nuc}}({{\rm C^*}(G, A,\alpha)})\leq n$ (or ${\rm Tdim_{nuc}}$ $({{\rm C^*}(G, A,\alpha)})\leq n$).
 \end{abstract}
\maketitle

\section{Introduction}
The Elliott program for the classification of amenable
 ${\rm C^*}$-algebras might be said to have begun with the ${\rm K}$-theoretical
 classification of AF algebras in \cite{E1}. Since then, many classes of
 ${\rm C^*}$-algebras have been classified by the Elliott invariant.
    A major next step was the classification of simple
 $\rm AH$ algebras without dimension growth (in the real rank zero case see \cite{E6}, and in the general case
 see \cite{GL}).
 A crucial intermediate step was Lin's axiomatization of  Elliott-Gong's decomposition theorem for  simple $\rm AH$ algebras of real rank zero (classified by Elliott-Gong in \cite{E6}) and Gong's decomposition theorem (\cite{G1}) for simple $\rm AH$ algebras (classified by Elliott-Gong-Li in \cite{GL}).  Lin introduced the concepts of   $\rm TAF$ and $\rm TAI$ (\cite{L0} and \cite{L1}). Instead of assuming inductive limit structure, Lin started with a certain abstract (tracial) approximation property.  This led eventually to the classification of  simple separable amenable stably finite  ${\rm C^*}$-algebras with finite nuclear dimension in the UCT class (see \cite{GLN1}, \cite{GLN2}, \cite{EZ5}, \cite{TWW1}).

Inspired by    Lin's  tracial interval algebras in \cite{L1},  Elliott and  Niu in \cite{EZ} considered  the natural   notion of
 tracial approximation by  other classes of ${\rm C^*}$-algebras. Let $\Omega$ be a class of unital ${\rm C^*}$-algebras. Then the class
 of simple  separable  ${\rm C^*}$-algebras which can be tracially
approximated by ${\rm C^*}$-algebras in $\Omega,$ denoted by  ${\rm TA}\Omega$,  is
defined as follows.  A  simple unital ${\rm C^*}$-algebra $A$ is said to
      belong to the class ${\rm TA}\Omega$ if,  for any
 $\ep>0,$ any finite
subset $F\subseteq A,$ and any non-zero element $a\geq 0,$  there
are a  projection $p\in A$ and a ${\rm C^*}$-subalgebra $B$ of $A$ with
$1_B=p$ and $B\in \Omega$ such that

$(1)$ $\|xp-px\|<\ep$, for all $x\in  F$,

$(2)$ $pxp\in_\ep B$, for
all $x\in  F$, and

 $(3)$ $1-p$ is Murray-von Neumann equivalent to a
projection in $\overline{aAa}$.

The Rokhlin property in ergodic theory was adapted to
the context
 of von Neumann algebras by  Connes in \cite{AC}. It was adapted by  Herman
 and  Ocneanu  for  automorphisms of  UHF algebras in \cite{HHO1}.  Izumi \cite{Izu},  R{\o}rdam \cite{R11} and
 Kishimoto \cite{K3}
 considered the Rokhlin property in  a much more general ${\rm C^*}$-algebra context. More recently,  Phillips  and  Osaka studied actions of a
 finite group and  of the  group $\mathbb{Z}$ of integers   on certain  simple ${\rm C^*}$-algebras which have a modified  Rokhlin property
  in \cite{OP} and \cite{P2}.

In \cite{P2}, Phillips show that
   let $A$ be an infinite-dimensional simple separable unital
${\rm C^*}$-algebra with tracial topological  rank zero. Let $\alpha:G\to {\rm Aut}(A)$ is an action of a finite group $G$ on $A$ which has the tracial Rokhlin property. Then  ${\rm C^*}( G, A,\alpha)$ has tracial topological rank zero.

Inspired by Phillips's above result, in  \cite{Q9}, Fan and Fang  showed  the following result:

  Let $\Omega$ be a class of  unital
${\rm C^*}$-algebras such that $\Omega$ is closed under passing to  unital hereditary ${\rm C^*}$-subalgebra and tensoring matrix algebras. Let $A\in$ $\rm {TA}\Omega$ (tracial approximated by ${\rm C^*}$-algebras in $\Omega$) be  an infinite-dimensional  simple  unital ${\rm C^*}$-algebra.
 Suppose that $\alpha:G\to {\rm{Aut}}(A)$ is an action of a finite group $G$ on $A$
 which has  tracial Rokhlin property. Then the crossed product ${\rm C^*}$-algebra
 ${\rm C^*}( G, A,\alpha)$ belongs to $\rm{TA}\Omega$.

Note that:  a similar  result was  obtained independently using different method by H. H. Lee and H. Osaka in \cite {LO}.

 Let  $\Omega$ be a class of  unital
${\rm C^*}$-algebras such that  for any $B\in \Omega$, $B$  has  property $H$ (we assume that  property $H$ is closed under tensoring with matrix algebras and
 under passing to  unital
hereditary ${\rm C^*}$-subalgebras). If we show that $A$ has the property $H$ for any unital simple separable $A\in \rm TA\Omega$, then by the above result, we can get the following result:

\ Let $A$ be a unital
${\rm C^*}$-algebras such that $A$ has property $H$.
 Suppose that $\alpha:G\to$ ${\rm {Aut}}(A)$ is an action of a finite group $G$ on $A$
 which has the  tracial Rokhlin property. Then the crossed product ${\rm C^*}$-algebra
 ${\rm C^*}( G, A,\alpha)$ also has  property $H$.

 In the quest to classify simple separable nuclear  ${\rm C^*}$-algebras, as suggested by G. A. Elliott, it has become necessary to invoke some regularity
 property of the  ${\rm C^*}$-algebras. There are three regularity properties of particular interest: tensorial absorption of the Jiang-Su algebra $\mathcal{Z}$, also called $\mathcal{Z}$-stability;  finite nuclear dimension; and strict comparison of positive elements.
 The latter can be reformulated as an algebraic property of Cuntz semigroup, called almost unperforated. Winter and Toms have conjectured that these three fundamental properties are equivalent for all separable, simple, nuclear  ${\rm C^*}$-algebras.

Hirshberg and Oroviz introduced tracially $\mathcal{Z}$-stable in \cite{HO}, they  showed that $\mathcal{Z}$-stable is equivalent to tracially $\mathcal{Z}$-stable for unital simple separable amenable
  ${\rm C^*}$-algebra in \cite{HO}.

Inspired by  Hirshberg and Oroviz's tracially $\mathcal{Z}$-stable, and  inspired by the work of Elliott,  Gong,  Lin, and  Niu in \cite {EGLN2}, \cite{GL2} and \cite{GL3},  and also in order to search a tracial version of Toms-Winter conjecture, Fu and Lin  introduced  some class of tracial nuclear dimensional ${\rm C^*}$-algebras in \cite{FU} and \cite{FL}.
Let $\Omega$ be a class of ${\rm C^*}$-algebras. Fu and Lin introduced  a class of ${\rm C^*}$-algebras which was called asymptotical tracially in $\Omega$, and denoted this class of ${\rm C^*}$-algebras by $\rm AT\Omega$ (see Definition \ref{def:2.7}).

In this paper, we will show the following results:

Let $\Omega$ be a class of  unital
separable  $\rm {C^*}$-algebras and let $A$ be a unital  simple separable infinite dimensional  amenable $\rm {C^*}$-algebra.  Then  $A\in \rm {AT}\Omega$ if and only if $A\in \rm{TA(AT\Omega)}$ (see Definition \ref{def:2.7} and Definition \ref{def:2.2}).

Let $\Omega$ be a class of unital ${\rm C^*}$-algebras which have the second type  tracial nuclear dimensional at moat $n$ (or have tracial nuclear dimensional at most $n$). Let $A$ be a unital simple ${\rm C^*}$-algebra  such  that  $A$ is asymptotical tracially in $\Omega$. Then ${\rm T^2dim_{nuc}}(A)\leq n$ (or ${\rm Tdim_{nuc}}(A)\leq n$) (see Definition \ref{def:2.9} and Definition \ref{def:2.7}).
	
As applications, we can get the following results:

 Let $A$ be an infinite dimensional  simple separable amenable unital
 ${\rm C^*}$-algebra such that ${\rm T^2dim_{nuc}}(A)\leq n$.
Suppose that  $\alpha:G\to {\rm Aut}(A)$ is  an action of a finite group $G$ on $A$
 which has the  tracial Rokhlin property.  Then ${\rm T^2dim_{nuc}}({{\rm C^*}(G, A,\alpha)})\leq n$.

 Let $A$ be an infinite dimensional  simple separable amenable unital
 ${\rm C^*}$-algebra such that ${\rm Tdim_{nuc}}(A)\leq n$.
Suppose that  $\alpha:G\to {\rm Aut}(A)$ is  an action of a finite group $G$ on $A$
 which has the  tracial Rokhlin property.  Then ${\rm Tdim_{nuc}}({{\rm C^*}(G, A,\alpha)})\leq n$ (\textbf{Note that}: this  result also  can be obtained by Theorem 9.3 of  \cite{FL} and Theorem 3.1 of \cite{EFF} and Theorem \ref{thm:3.3}).

	\section{Preliminaries and definitions}

Let  ${\rm M}_{n}(A)_+$ denote
the positive elements of  ${\rm M}_{n}(A)$.  Given $a,~ b\in {\rm M}_{n}(A)_+,$
we say that $a$ is Cuntz subequivalent to $b$ (written $a\precsim b$) if there is a sequence $(v_n)_{n=1}^\infty$
of elements of ${\rm M}_{n}(A)$ such that $$\lim_{n\to \infty}\|v_nbv_n^*-a\|=0.$$
We say that $a$ and $b$ are Cuntz equivalent (written $a\sim b$) if $a\precsim b$ and $b\precsim a$. We write $\langle a\rangle$ for the equivalence class of $a$.

The object ${\rm W}(A):={\rm M}_{\infty}(A)_+/\sim$
 will be called the Cuntz semigroup of $A$.  ${\rm W}(A)$ becomes  an ordered  semigroup   when equipped with the addition operation
$$\langle a\rangle+\langle b\rangle=\langle a \oplus b\rangle,$$
 and the order relation
$$\langle a\rangle\leq \langle b\rangle\Leftrightarrow a\precsim b.$$

Let $A$ be a  unital ${\rm C^*}$-algebra. Recall that a positive element $a\in A$ is called purely positive if $a$ is
not Cuntz equivalent to a projection. Let $A$ be a stably finite ${\rm C^*}$-algebra and let  $a\in A$ be  a positive element. Then $a$ is a purely positive element or $a$ is Cuntz equivalent to a projection.

Given $a$ in $A_+$ and $\varepsilon>0,$ we denote by $(a-\varepsilon)_+$ the element of ${\rm C^*}(a)$ corresponding (via the functional calculus) to the function $f(t)={\max (0, t-\varepsilon)},$ $t\in \sigma(a)$. By the functional calculus, it follows in a straightforward manner that $((a-\varepsilon_1)_+-\varepsilon_2)_+=(a-(\varepsilon_1+\varepsilon_2))_+.$

The following facts are  well known.
\begin{theorem}{\rm (\cite{PPT}, \cite{HO}, \cite{P3}, \cite{RW}.)} \label{thm:2.1} Let $A$ be a ${\rm C^*}$-algebra.

 $(1)$ Let $a,~ b\in A_+$ and any  $\varepsilon>0$  be such that
$\|a-b\|<\varepsilon$.  Then there is a contraction $d$ in $A$ with $(a-\varepsilon)_+=dbd^*$.

 $(2)$ Let $a$ be a  positive element  of $A$
not Cuntz equivalent to a projection. Let  $\delta>0$,  and let $f\in C_0(0,1]$ be a non-negative function with $f=0$ on $(\delta,1),$  $f>0$ on $(0,\delta)$,
and $\|f\|=1$.  Then $f(a)\neq 0$
and  $(a-\delta)_++f(a)\precsim a.$

$(3)$ Let $a,~ b\in A$ satisfy $0\leq a\leq b$. Let $\varepsilon\geq 0$.
Then $(a-\varepsilon)_+\precsim(b-\varepsilon)_+$ (Lemma 1.7 of \cite{P3}).
\end{theorem}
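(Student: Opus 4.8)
The three assertions are the standard structural facts about the Cuntz semigroup $\mathrm{W}(A)$, and the natural plan is to establish them in the order $(3)\Rightarrow(1)$, treating $(2)$ by a separate spectral argument. The single recurring obstacle, and the reason none of the three reduces to a one-line operator inequality, is that the cut-down function $t\mapsto(t-\varepsilon)_+$ is \emph{not} operator monotone, so $0\le a\le b$ does not give $(a-\varepsilon)_+\le(b-\varepsilon)_+$; one must instead produce the subequivalence by exhibiting approximating elements.

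I would prove $(3)$ first, as it is foundational. The case $\varepsilon=0$ is just the standard implication $a\le b\Rightarrow a\precsim b$, obtained from $a^{1/2}=\lim_n b^{1/2}c_n$ with $c_n=(b+1/n)^{-1/2}a^{1/2}$, which are contractions because $a\le b\le b+1/n$ forces $a^{1/2}(b+1/n)^{-1}a^{1/2}\le 1$. For $\varepsilon>0$ one writes $(a-\varepsilon)_+=a^{1/2}\psi(a)a^{1/2}$ with $\psi(t)=(t-\varepsilon)_+t^{-1}$, $0\le\psi\le 1$, and then has to manufacture $v_n$ with $v_n(b-\varepsilon)_+v_n^*\to(a-\varepsilon)_+$; this functional-calculus construction (Lemma 1.7 of \cite{P3}) is the technical heart of the whole statement, precisely because the naive inequality is unavailable.

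Granting $(3)$, the Cuntz-subequivalence content of $(1)$ is immediate: from $\|a-b\|<\varepsilon$ one gets $a\le b+\varepsilon\cdot 1$ in the unitization, so $(3)$ applied to this pair and cut at $\varepsilon$ yields $(a-\varepsilon)_+\precsim\big((b+\varepsilon\cdot 1)-\varepsilon\big)_+=b$. To upgrade this to the stated \emph{equality} $(a-\varepsilon)_+=dbd^*$ for a single contraction $d$, I would run R\o rdam's explicit construction, setting $d=(a-\varepsilon)_+^{1/2}\beta(b)$ with $\beta$ a regularized inverse square root of $b$ and exploiting that $(a-\varepsilon)_+$ is supported on the part of the spectrum where $b$ is bounded below (there $b\ge\varepsilon-\|a-b\|>0$); a direct computation then gives $dbd^*=(a-\varepsilon)_+$ and $\|d\|\le 1$. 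Finally, for $(2)$ the argument is spectral: since $a$ is not Cuntz equivalent to a projection, $0$ cannot be isolated in $\sigma(a)$ (otherwise the spectral projection of the part bounded away from $0$ would witness such an equivalence), so $\sigma(a)\cap(0,\delta)\neq\emptyset$ for every $\delta$, whence $f(a)\neq0$ because $f>0$ on $(0,\delta)$; and as $(a-\delta)_+$ and $f(a)$ are orthogonal, their sum is $h(a)$ for the single function $h=(\cdot-\delta)_++f\in C_0(0,1]_+$ with $h(0)=0$, so $\mathrm{supp}(h)\subseteq\mathrm{supp}(\mathrm{id})$ gives $h(a)\precsim a$, i.e. $(a-\delta)_++f(a)\precsim a$.
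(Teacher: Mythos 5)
The paper does not actually prove this theorem: it is presented as a list of well-known facts and discharged entirely by the citations to \cite{PPT}, \cite{HO}, \cite{P3} and \cite{RW}, so there is no in-text argument to compare yours against. Your sketch is a faithful reconstruction of the standard proofs from those references, and the parts you carry out are correct: the argument for $(2)$ (that $0$ is a non-isolated point of $\sigma(a)$ when $a$ is not Cuntz equivalent to a projection, that $(t-\delta)_+$ and $f$ have disjoint supports so their sum is $h(a)$ for a single $h\in C_0(0,1]_+$, and that $h(a)\precsim a$) is complete; so are the $\varepsilon=0$ case of $(3)$ and the deduction $(a-\varepsilon)_+\precsim b$ in $(1)$ from $a\le b+\varepsilon\cdot 1$ and $(3)$. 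Two caveats keep your text from being a self-contained proof. First, the two genuinely technical ingredients --- the functional-calculus/Cauchy-sequence argument behind Lemma 1.7 of \cite{P3} for $(3)$ with $\varepsilon>0$, and the exact-equality construction $dbd^*=(a-\varepsilon)_+$ in $(1)$ --- are named but not executed, and $(1)$ asserts an equality, which is strictly stronger than the subequivalence you actually derive. Second, your heuristic for $(1)$, that ``$(a-\varepsilon)_+$ is supported on the part of the spectrum where $b$ is bounded below,'' does not make literal sense when $a$ and $b$ fail to commute; the actual construction in the cited sources works from the operator inequality $a-\|a-b\|\cdot 1\le b$ and shows that a sequence of the form $d_n=(a-\varepsilon)_+^{1/2}g_n(a)r_n$ is norm-convergent, rather than invoking any joint spectral decomposition, and a ``direct computation'' of $dbd^*$ with your proposed $d$ would not close. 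Since the paper itself treats these same steps as citations, your proposal is on an equal footing with it; to make it complete you would need to supply those two lemmas in full.
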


Inspired by  Lin's  tracial approximation by
  interval algebras in \cite{L1},  Elliott and  Niu in \cite{EZ} considered  the natural   notion of
 tracial approximation by  other classes of ${\rm C^*}$-algebras.

 Let $\Omega$ be a class of unital ${\rm C^*}$-algebras. Then the class
 of simple  separable  ${\rm C^*}$-algebras which can be tracially
approximated by ${\rm C^*}$-algebras in $\Omega,$ denoted by  ${\rm TA}\Omega$,  is defined as follows:

\begin{definition}\label{def:2.2}{\rm (\cite{EZ},~\cite{L1}.)}
A  simple unital ${\rm C^*}$-algebra $A$ is said to
    belong to the class ${\rm TA}\Omega$ if,  for any
 $\varepsilon>0,$ any finite
subset $\mathcal{G}\subseteq A,$ and any non-zero element $a\geq 0,$  there
are a  projection $p\in A$ and a ${\rm C^*}$-subalgebra $B$ of $A$ with
$1_B=p$ and $B\in\Omega$ such that

$(1)$ $\|xp-px\|<\varepsilon$, for all $x\in  \mathcal{G}$,

$(2)$ $pxp\in_\varepsilon B$, for
all $x\in  \mathcal{G}$, and

 $(3)$ $1-p\precsim a$.
\end{definition}

 Let $a$ and $b$ be two positive elements in a  $\rm C^*$-algebra  $A$. We
write $[a]\leq[b]$ (cf Definition 3.5.2 in \cite{L2}), if there exists a partial isometry $v\in A^{**}$
such that, for every $c\in {\rm Her}(a),~ v^{*}c, ~ cv\in A,~ vv^*=P_{a}$, where $P_{a}$ is the range projection of $a$ in $A^{**}$,
and $v^*cv\in {\rm Her}(b)$. We write $[a]=[b]$ if $v^*{\rm Her}(a)v={\rm Her}(b)$.
Let $n$ be a positive integer. We write $n[a]\leq [b] $, if there
are $n$ mutually orthogonal positive elements $b_1,~ b_2,~ \cdots,~
b_n\in {\rm Her}(b)$ such that $[a]\leq[b_i],$ for  $i=1, ~ 2,~ \cdots,~ n$. (cf. Definition 1.1 in \cite{PZ}, Definition 3.2 in \cite{OMT},  or Definition 3.5.2 in \cite{L2}.)

Let $0<\sg_1<\sg_2\leq 1$ be two positive numbers. Define
$$
f_{\sg_1}^{\sg_2}(t)=\left\{
  \begin{array}{ll}
   1 & if\ t\geq \sg_2\\
  \frac{t-\sg_1}{\sg_2-\sg_1} & if\ \sg_1\leq t \leq \sg_2\\
  0 & if\ 0<t\leq \sg_1.
  \end{array} \right.
 $$






\begin{definition}{\rm(\cite{HLX}) }\label{def:2.3} Let  $\Omega$ be a class of unital
$\rm C^*$-algebras. A unital $\rm C^*$-algebra $A$ is said to have property ${\rm
(I)}$ if,  for any positive numbers $0<\sg_3<\sg_4<\sg_1<\sg_2<1$,
any $\varepsilon>0$, any finite subset ${F}\subseteq A$,  any finite nonzero positive element $G\subseteq A_+$,  and any integer $n>0$, there exist a
nonzero projection $p\in A$ and a $C^*$-subalgebra $B$ of $A$ with $B\in \Omega$
and $1_B=p$, such that

$(1)$ $\|xp-px\|<\varepsilon$, for all $x\in{F} $,

$(2)$ $pxp\in_\varepsilon B$, for all $x\in{F}$,

$(3)$
$n[f^{\sg_2}_{\sg_1}((1-p)a(1-p))]\leq[f^{\sg_4}_{\sg_3}(pap)]$, for any $a\in G$.
\end{definition}

\begin{theorem}{ \rm( \cite{HLX}) }\label{thm:2.4} Let $A$ be a unital simple $\rm {C^*}$-algebra.  Then $A$ belong to  ${\rm TA}\Omega$  if and only if $A$ has property ${\rm
(I)}$.
\end{theorem}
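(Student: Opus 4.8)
The plan is to exploit the fact that conditions $(1)$ and $(2)$ are literally the same in Definition \ref{def:2.2} and Definition \ref{def:2.3}, so that the whole theorem reduces to interchanging the two ways of controlling the remainder $1-p$: the single subequivalence $1-p\precsim a$ of ${\rm TA}\Omega$ versus the multiplicity comparison $n[f^{\sigma_2}_{\sigma_1}((1-p)a(1-p))]\leq[f^{\sigma_4}_{\sigma_3}(pap)]$ of property $(\mathrm{I})$. In both directions the simplicity of $A$ enters through its divisibility: any nonzero positive element can be cut into many mutually orthogonal pieces each Cuntz-below it, and the unit is Cuntz-below finitely many copies of any fixed nonzero positive element. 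The strict separation $\sigma_3<\sigma_4<\sigma_1<\sigma_2$ is the room reserved to absorb the commutator errors produced by $(1)$, via the perturbation estimates of Theorem \ref{thm:2.1}.

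For $A\in{\rm TA}\Omega\Rightarrow A$ has property $(\mathrm{I})$, I would first use simplicity to choose a single nonzero positive $d\in A$ admitting, for each of the finitely many $a\in G$, at least $n+1$ mutually orthogonal copies Cuntz-below $f^{\sigma_4}_{\sigma_3}(a)$. Then I apply Definition \ref{def:2.2} with this $d$, with the finite set enlarged to contain $\{a,a^{1/2}:a\in G\}$ and a small tolerance, obtaining $p$ and $B$ satisfying $(1)$, $(2)$ and $1-p\precsim d$. Since $(1-p)a(1-p)\leq\|a\|(1-p)\precsim d$, the source $f^{\sigma_2}_{\sigma_1}((1-p)a(1-p))$ is Cuntz-below $d$. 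Using almost-commutation to write $a$ block-diagonally as $pap\oplus(1-p)a(1-p)$ up to $O(\varepsilon)$, Theorem \ref{thm:2.1} compares $f^{\sigma_4}_{\sigma_3}(a)$ with $f^{\sigma_4}_{\sigma_3}(pap)$ up to one boundary block dominated by $d$; hence at least $n$ of the $n+1$ orthogonal copies of $d$ survive inside $\mathrm{Her}(f^{\sigma_4}_{\sigma_3}(pap))$, and each dominates the source. This sacrificial-copy device delivers $(3)$ without ever subtracting in the (noncancellative) Cuntz semigroup.

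For the converse, $A$ has property $(\mathrm{I})\Rightarrow A\in{\rm TA}\Omega$, I would start from a prescribed nonzero positive $a$, a finite set $\mathcal{G}$ and $\varepsilon>0$, and use simplicity to fix an integer $N$ with $\langle 1\rangle\leq N\langle a\rangle$. Applying Definition \ref{def:2.3} with $F=\mathcal{G}$, with $G=\{1_A\}$, with multiplicity governed by $N$, and with any admissible parameters, and using that $f^{\sigma_2}_{\sigma_1}(1-p)=1-p$ and $f^{\sigma_4}_{\sigma_3}(p)=p$, condition $(3)$ collapses to $n\langle 1-p\rangle\leq\langle p\rangle$, which makes $1-p$ arbitrarily small relative to the unit. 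The remaining task is to upgrade this uniform smallness, together with $\langle 1\rangle\leq N\langle a\rangle$, to the honest subequivalence $1-p\precsim a$ demanded by Definition \ref{def:2.2}$(3)$, after which $(1)$ and $(2)$ transfer verbatim.

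I expect the main obstacle to be precisely this last upgrade. Because $W(A)$ is not cancellative, one cannot simply divide out the constant $N$: having $(N+1)$ orthogonal copies of $1-p$ below $\langle 1\rangle\leq N\langle a\rangle$ does not by itself confine one copy to a single summand $\langle a\rangle$, since Cuntz subequivalence into a direct sum need not respect the summands. Reconciling the two comparisons is therefore the technical heart of the argument, and it is where the simplicity of $A$ must be used in its strongest form, via a careful orthogonal matching rather than the soft dimension-function intuition; the corresponding step in the other direction is comparatively tame precisely because the sacrificial-copy trick sidesteps cancellation there.
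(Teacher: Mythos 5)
First, note that the paper does not actually prove this statement: Theorem \ref{thm:2.4} is imported wholesale from \cite{HLX}, so there is no internal proof to compare against, and your proposal has to be judged against what any complete proof must contain. Judged that way, it has a genuine gap in each direction, and both gaps are the same phenomenon: you are implicitly cancelling in the Cuntz semigroup of a general simple ${\rm C^*}$-algebra, where cancellation and division are unavailable. In the direction ${\rm TA}\Omega\Rightarrow{\rm (I)}$, your ``sacrificial copy'' step argues that since $(n+1)\langle d\rangle\le\langle f^{\sg_4}_{\sg_3}(a)\rangle$ and $f^{\sg_4}_{\sg_3}(a)$ equals $f^{\sg_4}_{\sg_3}(pap)$ plus one block dominated by $d$, at least $n$ copies of $d$ ``survive'' in ${\rm Her}(f^{\sg_4}_{\sg_3}(pap))$. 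Written out, this is $(n+1)\langle d\rangle\le y+\langle d\rangle\Rightarrow n\langle d\rangle\le y$, which is exactly the cancellation you claim to be sidestepping. The standard repair is different: one puts the $n+1$ mutually orthogonal witnesses $d_1,\dots,d_{n+1}$ themselves into the finite set $\mathcal{G}$ of Definition \ref{def:2.2}, so that each $pd_ip$ is individually close to $d_i$, and Theorem \ref{thm:2.1}(1) then moves each one (not their sum) into ${\rm Her}(f^{\sg_4}_{\sg_3}(pap))$ while preserving approximate orthogonality. You also treat the relation $n[x]\le[y]$ of Definition \ref{def:2.3} as if it were $n\langle x\rangle\le\langle y\rangle$; it is Lin's strictly stronger comparison via partial isometries in $A^{**}$ between hereditary subalgebras, and the passage from $\precsim$ back to $[\cdot]\le[\cdot]$ (using the room between $\sg_1$ and $\sg_2$) has to be carried out explicitly. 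Finally, producing $n+1$ orthogonal nonzero elements below $f^{\sg_4}_{\sg_3}(a)$ uses that $A$ is non-elementary, which is not among the stated hypotheses.

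The converse direction is where the proposal actually fails rather than merely omits detail. You reduce to $n[1-p]\le[p]$ together with $\langle 1\rangle\le N\langle a\rangle$, observe correctly that one cannot divide by $N$ in ${\rm W}(A)$, declare this reconciliation to be ``the technical heart of the argument'' --- and then do not supply it. That deduction \emph{is} the theorem in this direction; everything else (conditions $(1)$ and $(2)$ transferring verbatim) is bookkeeping. Worse, the route you sketch cannot be completed as described: ``careful orthogonal matching'' of $N\langle 1-p\rangle\le N\langle a\rangle$ still amounts to dividing by $N$, and ${\rm W}(A)$ of a simple unital ${\rm C^*}$-algebra can be perforated, so no rearrangement of orthogonal representatives will extract $1-p\precsim a$ from a comparison of multiples alone. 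The argument in \cite{HLX} does not proceed by feeding $G=\{1_A\}$ into property ${\rm (I)}$ and then cancelling; it rests on a separate comparison lemma for simple ${\rm C^*}$-algebras that relates the $n$-fold orthogonal domination $n[\,\cdot\,]\le[\,\cdot\,]$ directly to subequivalence below a prescribed $a$, and that lemma is the missing ingredient here. As it stands, the proposal identifies the right pivot point of the proof but proves neither implication.
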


\begin{theorem}\label{thm:2.5}
		Let $\Omega$ be a class of simple unital $\rm C^{*}$-algebra. Let $A$ be a unital simple $\rm C^{*}$-algebra. Then  $A\in \rm {TA}\Omega$, if and only if,
		for any $\varepsilon>0$, any finite $F\subseteq A$, and any nonzero positive element $a\in A_{+}$, there exist a nonzero
		projection $p\in A$ and a $\rm C^{*}$-subalgebra $B$ of $A$ with $1_{B}=p$ and $B\in\Omega$, such that
		
		$(1)$ $\|px-xp\|<\varepsilon$, for all $x\in F$,
		
		$(2)$ $pxp\in_{\varepsilon}B$,  for all $x\in F$,
		
		$(3)$ $1-p\precsim a$, and
		
		$(4)$ $\|pxp\|\geq\|x\|-\varepsilon$, for all $x\in F$.
	\end{theorem}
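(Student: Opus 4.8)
The forward implication (if $(1)$--$(4)$ hold then $A\in {\rm TA}\Omega$) is immediate, since conditions $(1)$, $(2)$, $(3)$ are literally the requirements of Definition \ref{def:2.2}; one simply discards $(4)$. So the content lies in the reverse implication, and within it the only genuinely new point is to produce, alongside the usual tracial decomposition, the norm estimate $(4)$. The plan is to run the approximation through the spectral--comparison formulation of Theorem \ref{thm:2.4} rather than through Definition \ref{def:2.2} directly, because condition $(3)$ of property ${\rm (I)}$ is exactly the device that prevents the norm of $x$ from escaping into the complementary corner.

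First I would reduce to positive elements. Enlarging $F$ and shrinking the tolerance, it suffices to secure $(4)$ for the normalized positive elements $y_x:=x^*x/\|x^*x\|$, $x\in F$ with $x\neq 0$: from $\|px-xp\|<\et$ and $p^2=p$ one gets $\|pxp-xp\|<\et$, hence $\big|\,\|pxp\|^2-\|p\,x^*x\,p\|\,\big|\leq 2\et\|x\|$, and a short estimate turns a lower bound $\|p\,y_x\,p\|\geq 1-\dt$ into $\|pxp\|\geq\|x\|-\ep$ once $\et,\dt$ are chosen small relative to $\ep$. Thus I may assume every element whose norm must be preserved is a positive contraction of norm one.

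Since $A\in{\rm TA}\Omega$, Theorem \ref{thm:2.4} gives that $A$ has property ${\rm (I)}$. I would apply property ${\rm (I)}$ to the finite positive set $G=\{y_x\}\cup\{a/\|a\|\}$, to $F$, to a small tolerance $\et$, with $n=1$, and---this is the crucial choice---with all four cut-points pushed to the top of the spectrum, $1-\dt<\sg_3<\sg_4<\sg_1<\sg_2<1$. This yields a projection $p$ and $B\in\Omega$ with $1_B=p$ satisfying $(1)$ and $(2)$, together with $[f^{\sg_2}_{\sg_1}((1-p)y(1-p))]\leq[f^{\sg_4}_{\sg_3}(pyp)]$ for every $y\in G$. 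The bound $(4)$ then drops out of a two-case argument: since $p$ almost commutes with $y_x$ we have $\|y_x-p\,y_x\,p-(1-p)y_x(1-p)\|<2\et$, so $1=\|y_x\|\leq\max\{\|p\,y_x\,p\|,\|(1-p)y_x(1-p)\|\}+2\et$. If the maximum is the $p$-corner term we are done directly; if it is the $(1-p)$-corner term then $\|(1-p)y_x(1-p)\|>\sg_2$, so $f^{\sg_2}_{\sg_1}((1-p)y_x(1-p))\neq 0$, whence the comparison forces $f^{\sg_4}_{\sg_3}(p\,y_x\,p)\neq 0$ and therefore $\|p\,y_x\,p\|>\sg_3>1-\dt$. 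Either way $\|p\,y_x\,p\|>1-\dt$, which by the reduction gives $(4)$.

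The remaining condition $(3)$, namely $1-p\precsim a$, is the routine part: it is one half of the defining property of ${\rm TA}\Omega$, and since property ${\rm (I)}$ and membership in ${\rm TA}\Omega$ describe the same class (Theorem \ref{thm:2.4}) with $a$ already placed in the data $G$, the projection furnished above may be taken to satisfy $1-p\precsim a$ as well; assembling this with $(1)$, $(2)$, $(4)$ finishes the argument. The main obstacle is exactly condition $(4)$: a plain ${\rm TA}\Omega$ decomposition does not control it---one can have $1-p$ Cuntz--small yet $\|(1-p)x(1-p)\|=\|x\|$, as already happens in $M_k$ with $1-p$ the rank-one projection onto the top eigenvector of $x$---so the whole point is to force the top spectral mass of $x$ into the $p$-corner, which is achieved precisely by invoking property ${\rm (I)}$ with all cut-points taken within $(1-\dt,1)$.
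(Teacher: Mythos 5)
Your proposal is correct and follows essentially the same route as the paper: both reduce condition $(4)$ to the positive elements $x^*x$ and then invoke the comparison clause of property ${\rm (I)}$ (Theorem \ref{thm:2.4}) with all spectral cut-points placed near $1$ to force $\|px^*xp\|\geq 1-\varepsilon$, your two-case dichotomy being just the contrapositive of the paper's argument by contradiction. The one loose point --- that a single projection $p$ can be taken to satisfy the ${\rm TA}\Omega$ conditions (in particular $1-p\precsim a$) and the property ${\rm (I)}$ comparison simultaneously --- is present in both write-ups, the paper doing it in the opposite order by first taking $p$ from Definition \ref{def:2.2} and then asserting the comparison $(4')$ for that same $p$.
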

    \begin{proof}
    	$\Longleftarrow$: This is obvious.
    	
    	$\Longrightarrow$: We must show that for any $\varepsilon>0$, any finite $F=\{x_{1},~\cdots,~ x_{n}\}\subseteq A$ (we may assume that $\|x_i\|=1$, for any $1\leq i\leq n$),	any nonzero positive element $a\in A_{+}$, there exist a nonzero
    	projection $p\in A$ and a $\rm C^{*}$-subalgebra $B$ of $A$ with $1_{B}=p$, and $B\in\Omega$, such that
    	
    	$(1)$ $\|px-xp\|<\varepsilon$, for all $x\in F$,
    	
    	$(2)$ $pxp\in_{\varepsilon}B$, for all $x\in F$,
    	
    	$(3)$ $1-p\precsim a$, and
    	
    	$(4)$ $\|pxp\|\geq\|x\|-\varepsilon$, for all $x\in F$.
    	
    	Since $A\in \rm {TA}\Omega$, with  $\varepsilon>0$,  and $G=\{x_{1},~\cdots,~x_{n},~x_{1}^{*}x_{1},~\cdots,$ $~x_{n}^{*}x_{n}\}\subseteq A$,
    	there exist a nonzero
    	projection $p\in A$, and a $\rm C^{*}$-subalgebra $B$ of $A$ with $1_{B}=p$, and $B\in \Omega$, such that
    	
    	$(1')$ $\|px-xp\|<\varepsilon/3$, for all $x\in G$,
    	
    	$(2')$ $pxp\in_{\varepsilon/3}B$, for all $x\in G$, and
    	
    	$(3')$ $1-p\precsim a$.
    	
    	By Theorem \ref{thm:2.4}, for  $x_{i}^{*}x_{i}\in A_{+}$,  with $\|x_{i}^{*}x_{i}\|=1$ ($1\leq i\leq n$), let
    	$0<\sigma_{3}=1-4\varepsilon<\sigma_{4}=1-3\varepsilon<\sigma_{1}=1-\varepsilon<\sigma_{2}<1$, we have
    	
    	$(4')$ $[f^{\sigma_{2}}_{\sigma_{1}}((1-p)x_{i}^{*}x_{i}(1-p))]\leq[f^{\sigma_{4}}_{\sigma_{3}}(px_{i}^{*}x_{i}p)]$, for $1\leq i\leq n$.
    	
    	If $\|px_{i}^{*}x_{i}p\|<1-\varepsilon$, then we have $f^{\sigma_{4}}_{\sigma_{3}}(px_{i}^{*}x_{i}p)=0$, by $(4')$, one has

    $$f^{\sigma_{2}}_{\sigma_{1}}((1-p)x_{i}^{*}x_{i}(1-p))=0.$$ So we has $$\|(1-p)x_{i}^{*}x_{i}(1-p)\|<1-4\varepsilon.$$

    	By $(1')$ and $(2')$, for $i=1,~\cdots,~n$, we have
    	
    	$\|x_{i}^{*}x_{i}-(1-p)x_{i}^{*}x_{i}(1-p)-px_{i}^{*}x_{i}p\|$
    	
    	$\leq\|x_{i}^{*}x_{i}-(1-p)x_{i}^{*}x_{i}-px_{1}^{*}x_{i}\|$

    $+\|(1-p)x_{i}^{*}x_{i}-(1-p)x_{i}^{*}x_{i}(1-p)\|+
    	\|px_{i}^{*}x_{i}-px_{i}^{*}x_{i}p\|$
    	
    	$<\varepsilon/3+\varepsilon/3+\varepsilon/3=\varepsilon$.
    	
    	Then, we have

     $1=\|x_{i}^{*}x_{i}\|\leq \|(1-p)x_{i}^{*}x_{i}(1-p)+px_{i}^{*}x_{i}p\|+\varepsilon$

     $= \max\{\|(1-p)x_{i}^{*}x_{i}(1-p),~\|px_{i}^{*}x_{i}p\|\}+\varepsilon <1$, this is  a contradictory.
    	
    	Therefore, for any $1\leq i\leq n$, we have  $$\|px_{i}^{*}x_{i}p\|\geq1-\varepsilon.$$
    	
    	Since $\|px_ip\|\leq\|x\|=1$, then, for any $1\leq i\leq n$, we have

     $$\|px_ip\|\geq\|px_{i}p\|^{2}\geq\|px_{i}^{*}x_{i}p\|-\varepsilon\geq1-2\varepsilon.$$
    	
    	With $2\varepsilon$ in place of $\varepsilon$.
    \end{proof}

Winter and Zacharias  introduce  nuclear dimension for  ${\rm C^*}$-algebras in \cite{WW3}.

\begin{definition}{\rm (\cite{WW3}.)}\label{def:2.6}
Let $A$ be a  ${\rm C^*}$-algebra, $m\in {\mathbb{N}}$.
 A complete positive compression   $\varphi:F\to A$ is $m$-decomposable (where $F$ is
finite dimensional  ${\rm C^*}$-algebra ), if there is  a decomposition $$F=F^{(0)}\oplus F^{(1)}
\oplus\cdots \oplus F^{(m)}$$ such that the restriction $\varphi^{(i)}$ of $\varphi$ to $F^{(i)}$
has order zero (A complete positive map $\varphi:A\to B$ is order zero if for any positive elements  $a,b$ with $ab=0$ we have $\varphi(a)\varphi(b)=0$)  for each $i\in \{0,\cdots, m\}$, we say $\varphi$ is
$m$-decomposable with respect to $F=F^{(0)}\oplus F^{(1)}
\oplus\cdots \oplus F^{(m)}$.

 A has nuclear dimension $m$,  write ${\rm {dim_{nuc}}}(A)=m$, if $m$ is the least integer such that
the following holds: For any finite subset $\mathcal{G}\subseteq A$ and $\varepsilon>0$, there is
a finite dimension complete positive compression  approximation $(F,\varphi, \psi)$
for $\mathcal{G}$ to within $\varepsilon$ (i.e., $F$ is finite dimensional $\psi: A\to F$  and
$\varphi:F\to A$ are complete positive  and $\|\varphi\psi(b)-b\|<\varepsilon$
for any $b\in \mathcal{G}$) such that $\psi$ is  complete positive compression, and $\varphi$ is $m$-decomposable with
 complete positive compression order zero components $\varphi^i$. If no such $m$ exists, we write
$dim_{nuc}(A)=\infty$.
\end{definition}

Inspired by  Hirshberg and  Orovitz's  tracial $\mathcal{Z}$-absorption in \cite{HO}, Fu introduced
 some notion of  tracial nuclear dimension in his doctoral dissertation  \cite{FU} (see also  \cite{FL}).

	\begin{definition}{\rm (\cite{FL})}\label{def:2.7}
		Let $A$ be a unital simple $\rm C^{*}$-algebra.  Let $n\in \mathbb{N}$ be an integer. $A$ is said to have tracial nuclear dimension at most $n$,
		and denoted by ${\rm Tdim_{nuc}} A\leq n$, if for any finite subset $\mathcal{F}\subseteq A$, for any $\varepsilon>0$ and for any nonzero positive element $a\in A_{+}$, there exist a finite dimensional $\rm C^{*}$-algebra $F$, a  completely positive contractive  map $\alpha:A\rightarrow F$,
		a nonzero piecewise contractive $n$-decomposable completely positive  map $\beta:F\rightarrow A$, and a completely positive contractive map
		$\gamma:A\rightarrow A\cap\beta^{\perp}(F)$ such that
		
		 $(1)$ $\|x-\gamma(x)-\beta\alpha(x)\|<\varepsilon$, for all $x\in\mathcal{F}$, and
		
		 $(2)$ $\gamma(1_{A})\precsim a$.
	\end{definition}
Note that: Tracial nuclear dimension at most $n$ is closed under tensoring with matrix algebras and under passing to unital hereditary $\rm C^{*}$-subalgebras.

\begin{theorem}\label{thm:2.9} \label{thm:2.8} Let $A$ be a unital
simple ${\rm C^*}$-algebra. Then   ${\rm Tdim_{nuc}} A\leq n$, if and only if, if for any finite subset $\mathcal{F}\subseteq A$, for any $\varepsilon>0$ and for any nonzero positive element $a\in A_{+}$, there exist a finite dimensional $\rm C^{*}$-algebra $F$, a  completely positive contractive  map $\alpha:A\rightarrow F$,
		a nonzero piecewise contractive $n$-decomposable completely positive  map $\beta:F\rightarrow A$, and a completely positive contractive map
		$\gamma:A\rightarrow A\cap\beta^{\perp}(F)$ such that
		
		 $(1)$ $\|x-\gamma(x)-\beta\alpha(x)\|<\varepsilon$, for all $x\in\mathcal{F}$, and
		
		 $(2)$ $(\gamma(1_{A})-\varepsilon)_+\precsim a$.

\end{theorem}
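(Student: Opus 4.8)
The plan is to prove both implications, the forward one being immediate and the reverse one carrying all the content. For the direction ($\Longrightarrow$), assume $\mathrm{Tdim_{nuc}}A\le n$ in the sense of Definition \ref{def:2.7}. Given $\mathcal F$, $\varepsilon>0$ and nonzero $a\in A_+$, take the maps $\alpha,\beta,\gamma$ supplied there; since $(\gamma(1_A)-\varepsilon)_+\precsim\gamma(1_A)$ by functional calculus and $\gamma(1_A)\precsim a$ by hypothesis, we get $(\gamma(1_A)-\varepsilon)_+\precsim a$, which is condition $(2)$ of the present statement. So this direction is trivial.

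For ($\Longleftarrow$), I would start from the weaker condition and upgrade it. Given $\mathcal F$ (which we may assume contains $1_A$ and consists of contractions), $\varepsilon>0$, and nonzero $a\in A_+$, first fix $\varepsilon'>0$ small enough that $\varepsilon'+2\sqrt{\varepsilon'}<\varepsilon$. Apply the hypothesis to the same data $(\mathcal F,\varepsilon',a)$ to obtain a finite dimensional $F$, maps $\alpha,\beta$, and a c.p.c. map $\gamma:A\to A\cap\beta^{\perp}(F)$ with $\|x-\gamma(x)-\beta\alpha(x)\|<\varepsilon'$ for all $x\in\mathcal F$ and $(\gamma(1_A)-\varepsilon')_+\precsim a$. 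The aim is to manufacture from this a new triple realizing condition $(2)$ of Definition \ref{def:2.7}, namely with the error map sending $1_A$ to something genuinely Cuntz-below $a$.

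The main obstacle is that one cannot simply reuse $\gamma$: the element $\gamma(1_A)$ need not be Cuntz-below $a$, and its low-spectral part (below $\varepsilon'$) may be Cuntz-equivalent to all of $\gamma(1_A)$, so it cannot be discarded for free. Instead I would set $e:=\gamma(1_A)$ and $c:=k(e)$, where $k\in C[0,1]$ is $0$ on $[0,\varepsilon']$ and equals $((t-\varepsilon')_+/t)^{1/2}$ for $t>0$, so that $cec=(e-\varepsilon')_+$, and define $\gamma'(x):=c\gamma(x)c$. Then $\gamma'$ is c.p.c., it still maps into $A\cap\beta^{\perp}(F)$ because $c\in C^*(e)$ lies in that subalgebra, we keep $\alpha$ and $\beta$ unchanged, and $\gamma'(1_A)=(e-\varepsilon')_+\precsim a$ gives exactly condition $(2)$.

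The crux is then to check that this compression barely disturbs the approximation on $\mathcal F$. Using the Cauchy--Schwarz inequality for the c.p.c. map $\gamma$, i.e. $\gamma(x)^*\gamma(x)\le\gamma(x^*x)\le\gamma(1_A)=e$ and symmetrically $\gamma(x)\gamma(x)^*\le e$, together with the elementary scalar estimate $t(1-k(t))^2\le\varepsilon'$ valid on $[0,1]$, I would obtain $\|(1-c)\gamma(x)\|^2\le\|(1-c)e(1-c)\|\le\varepsilon'$ and the symmetric bound $\|\gamma(x)(1-c)\|^2\le\varepsilon'$. Writing $\gamma(x)-\gamma'(x)=(1-c)\gamma(x)+c\gamma(x)(1-c)$ then yields $\|\gamma(x)-\gamma'(x)\|\le 2\sqrt{\varepsilon'}$, so that $\|x-\gamma'(x)-\beta\alpha(x)\|<\varepsilon'+2\sqrt{\varepsilon'}<\varepsilon$ for all $x\in\mathcal F$. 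This delivers the triple $(\alpha,\beta,\gamma')$ required by Definition \ref{def:2.7}, completing the reverse implication.
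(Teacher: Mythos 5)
Your proof is correct, but it takes a genuinely different route from the paper's. The paper's (very terse) argument reduces to the case where $\gamma(1_A)$ is a projection, invoking the technique behind Theorem \ref{thm:2.12} (Fu--Lin), and then observes that $(\gamma(1_A)-\varepsilon)_+\sim\gamma(1_A)$ for a projection, so the two conditions simply coincide; the substance of that reduction (which relies on the approximate multiplicativity of the $\beta_n$'s available in the ${\rm AT}\Omega$ setting) is not spelled out, and it is not obvious it transfers verbatim to Definition \ref{def:2.7}, where $\beta$ is only an $n$-decomposable order-zero map. You instead keep $\alpha$ and $\beta$ untouched and compress $\gamma$ by $c=k(\gamma(1_A))$ with $k(t)=((t-\varepsilon')_+/t)^{1/2}$, so that $\gamma'(1_A)=(\gamma(1_A)-\varepsilon')_+\precsim a$ exactly, and you pay for this with an explicit $2\sqrt{\varepsilon'}$ perturbation controlled by the Schwarz inequality $\gamma(x)^*\gamma(x)\le\gamma(x^*x)\le\gamma(1_A)$ and the scalar bound $t(1-k(t))^2\le\varepsilon'$; the observation that $c\in C^*(\gamma(1_A))\subseteq A\cap\beta^{\perp}(F)$ keeps $\gamma'$ in the right codomain. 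What your approach buys is a self-contained, purely functional-calculus argument that needs no projectionization and no extra structure on $\beta$; what the paper's approach buys, granting the projectionization lemma, is a one-line conclusion. Both directions of the equivalence are handled correctly in your write-up (the forward one being the trivial $(\gamma(1_A)-\varepsilon)_+\precsim\gamma(1_A)\precsim a$).
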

\begin{proof}
As in the proof of Theorem \ref{thm:2.12}, we can arrange that $\gamma_{n}(1_{A})$ is a projection. Then, we have  $(\gamma_{n}(1_{A})-\varepsilon)_+\thicksim \gamma_{n}(1_{A})$.
\end{proof}

\begin{definition}{\rm (\cite{FU})}\label{def:2.9}
	Let $A$ be a $\rm C^{*}$-algebra. Let $n\in \mathbb{N}$ be an integer. $A$ is said to have the second type tracial nuclear dimension
	at most $n$, and denoted by ${\rm T^{2}dim_{nuc}} A\leq n$, if for any finite positive subset $\mathcal{F}\subseteq A$, for any $\varepsilon>0$ and for any nonzero positive element  $a\in A_{+}$, there exist a finite dimensional $\rm C^{*}$-algebra $F=F_{0}\oplus\cdots\oplus F_{n}$ and completely positive  maps $\psi:A\rightarrow F$, $\varphi:F\rightarrow A$ such that
	
	$(1)$  for any $x\in F$, there exists $x'\in A_{+}$, such that $x'\precsim a$, and $\|x-x'-\varphi\psi(x)\|<\varepsilon$,
	
	$(2)$ $\|\psi\|\leq1$, and
	
	$(3)$ $\varphi|_{F_{i}}$ is a completely positive contractive  order zero map, for any $0\leq i\leq n$.
    \end{definition}

Let $A$ be a unital $\rm C^{*}$-algebra. It is easy to know that
 ${\rm {Tdim_{nuc}}}(A)\leq n$ implies  that ${\rm {T^2dim_{nuc}}}(A)\leq n$.

Note that: Second type tracial nuclear dimension  at most $n$ is closed under tensoring with matrix algebras and under passing to unital hereditary $\rm C^{*}$-subalgebras.

 The tracial Rokhlin property was  introduced by Phillips  in \cite{P2} for finite group actions.

 \begin{definition}{\rm (\cite{P2}.)} \label{def:2.10} Let $A$ be a separable simple  unital
 ${\rm C^*}$-algebra,
  and let $\alpha: G\to {\rm Aut}(A)$  be an action of  a finite group $G$ on $A$. We say that $\alpha$ has the
tracial Rokhlin property if, for any  finite set $\mathcal{G}\subseteq A,$
any  $\varepsilon>0,$ and  any  non-zero positive element $b\in A,$   there are  mutually orthogonal projections $e_g\in A$ for
$g\in G$ such that

$(1)$ $\|{\alpha_g}(e_h)-e_{gh}\|<\varepsilon$, for all  $g,~ h\in G,$

$(2)$ $\|e_gd-de_g\|<\varepsilon $, for all $g\in G$, and all $d\in { \mathcal{G}},$

$(3)$ with $e=\Sigma _{g\in G}e_g,$ the projection $1-e$ is Murray-von Neumann equivalent to a projection in the hereditary ${\rm C^*}$-subalgebra of $A$ generated by $b$, and

$(4)$ $\|ebe\|\geq \|b\|-\varepsilon$.
\end{definition}

	Let  $\Omega$ be a class of unital ${\rm C^*}$-algebras.  Then  the class
 of ${\rm C^*}$-algebras which are asymptotical   tracially   in $\Omega$, denoted  by ${\rm AT}\Omega$ was introduced by Fu and Lin in \cite{FL}.

    \begin{definition}\label{def:2.11}{\rm (\cite{FL})}
    	Let $A$ be a unital simple $\rm C^{*}$-algebra and  let $\Omega$ be a class of $\rm C^{*}$-algebras. We say $A$ is asymptotical tracially in $\Omega$, if for any finite subset $\mathcal{F}\subseteq A$, for any $\varepsilon>0$ and for any nonzero positive element $a\in A_{+}$, there exist a $\rm C^{*}$-algebra $B$ in $\Omega$, completely positive contractive  maps $\alpha:A\rightarrow B$,
        $\beta_{n}:B\rightarrow A$, and $\gamma_{n}:A\rightarrow A\cap\beta_{n}^{\perp}(B)(n\in \mathbb{N})$ such that

        $(1)$ $\|x-\gamma_{n}(x)-\beta_{n}\alpha(x)\|<\varepsilon$, for all $x\in\mathcal{F}$, and for all $n\in{\mathbb{N}}$,

        $(2)$ $\alpha$ is a $(\mathcal{F},\varepsilon)$-approximate embedding,

        $(3)$ $\mathop{\lim}\limits_{n\rightarrow\infty}\|\beta_{n}(xy)-\beta_{n}(x)\beta_{n}(y)\|=0$ and
        $\mathop{\lim}\limits_{n\rightarrow\infty}\|\beta_{n}(x)\|=\|x\|$, for all $x,~y\in B$, and

        $(4)$ $\gamma_{n}(1_{A})\precsim a$, for all $n\in \mathbb{N}$.
    \end{definition}

\begin{theorem}\label{thm:2.12}{\rm (\cite{FL})} Let $\Omega$ be a class of unital ${\rm C^*}$-algebras  such that $\Omega$ is closed under tensoring with matrix algebras and
 under passing to unital
hereditary ${\rm C^*}$-subalgebras. Let $A$ be a unital
simple ${\rm C^*}$-algebra. Then $A$  is asymptotical tracially in $\Omega$ (i.e. $A\in{\rm AT}\Omega$), if and only if, for any
 $\varepsilon>0,$ any finite
subset $\mathcal{G}\subseteq A,$ and any  non-zero element $a\geq 0,$ there exist
a ${\rm C^*}$-algebra $D$  in $\Omega$ and a nonzero  completely positive  contractive linear map  $\alpha:A\to D$ and  completely positive  contractive linear maps $\beta_n: D\to A$, $\gamma_n:A\to A\cap\beta_n(D)^{\perp}$ such that

$(1)$ the map $\alpha$ is unital  completely positive  linear map, $\beta_n(1_D)$ and $\gamma_n(1_A)$ are projections and  $\beta_n(1_D)+\gamma_n(1_A)=1_A$, for all $n\in \mathbb{N}$,

$(2)$ $\|x-\gamma_n(x)-\beta_n\alpha(x)\|<\varepsilon$, for all $x\in \mathcal{G}$, and for all $n\in {\mathbb{N}}$,

$(3)$ $\alpha$ is a $\mathcal{G}$-$2\varepsilon$ approximate embedding,

$(4)$ $\lim_{n\to \infty}\|\beta_n(xy)-\beta_n(x)\beta_n(y)\|=0$ and $\lim_{n\to \infty}\|\beta_n(x)\|=\|x\|$, for all $x,~y\in D$, and

$(5)$ $\gamma_n(1)\precsim a$, for all $n\in \mathbb{N}$.
\end{theorem}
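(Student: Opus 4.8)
The plan is to concentrate on the forward implication $(\Rightarrow)$; the converse is immediate (after a harmless rescaling of $\varepsilon$), since properties $(1)$--$(5)$ are a strengthening of the four conditions defining ${\rm AT}\Omega$ in Definition \ref{def:2.11}: a unital approximate embedding is in particular an approximate embedding, the maps are completely positive contractive, $D\in\Omega$, and $\beta_n(1_D)+\gamma_n(1_A)=1_A$ with $\gamma_n(1_A)\precsim a$ gives the required comparison. Thus any $A$ admitting data $(1)$--$(5)$ lies in ${\rm AT}\Omega$.

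For $(\Rightarrow)$, fix $\varepsilon>0$, a finite set $\mathcal{G}\subseteq A$ and a nonzero $a\ge 0$. First I would enlarge $\mathcal{G}$ to contain $1_A$, be self-adjoint, and be closed under the relevant products (all elements of norm $\le 1$), and apply Definition \ref{def:2.11} with a much smaller tolerance $\varepsilon'$, producing $B\in\Omega$ and completely positive contractive maps $\alpha:A\to B$, $\beta_n:B\to A$, $\gamma_n:A\to A\cap\beta_n^{\perp}(B)$ satisfying its four conditions. The first reduction is to make $\alpha$ unital. Since $\alpha$ is approximately multiplicative on the enlarged set, $\alpha(1_A)\approx\alpha(1_A)^2$ is approximately a projection, so functional calculus yields a projection $p\in B$ with $\|p-\alpha(1_A)\|$ small. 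Using the hypothesis that $\Omega$ is closed under unital hereditary ${\rm C^*}$-subalgebras, I pass to the corner $D:=pBp\in\Omega$, set $\tilde\alpha(x):=p\,\alpha(x)\,p$, and renormalize, $\hat\alpha(x):=\tilde\alpha(1_A)^{-1/2}\,\tilde\alpha(x)\,\tilde\alpha(1_A)^{-1/2}$, the inverse taken in $D$ (it exists for small $\varepsilon'$). Then $\hat\alpha:A\to D$ is unital completely positive, hence contractive, and $\hat\alpha\approx\alpha$ on $\mathcal{G}$, so it is still a $\mathcal{G}$-$2\varepsilon$ approximate embedding; replacing $\beta_n$ by $\beta_n|_D$ changes nothing asymptotically.

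The main step is to upgrade $\beta_n(1_D)$ and $\gamma_n(1_A)$ to complementary projections. Asymptotic multiplicativity of $\beta_n$ gives $\beta_n(1_D)\approx\beta_n(1_D)^2$, so for all large $n$ I set $q_n:=\chi_{[1/2,\infty)}(\beta_n(1_D))$, a projection with $\|q_n-\beta_n(1_D)\|\to 0$. Because $\gamma_n$ takes values in $\beta_n^{\perp}(B)$, one has $\gamma_n(1_A)\perp\beta_n(1_D)$; combined with $\|1_A-\gamma_n(1_A)-\beta_n(1_D)\|<\varepsilon'$ (condition $(1)$ of Definition \ref{def:2.11} at $x=1_A$, as $\alpha$ is now unital) this forces $\gamma_n(1_A)\approx 1_A-q_n$. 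I then define, by the same renormalization trick, $\beta_n'(x):=c_n\,q_n\beta_n(x)q_n\,c_n$ with $c_n:=(q_n\beta_n(1_D)q_n)^{-1/2}$ in $q_nAq_n$, and $\gamma_n'(x):=d_n\,(1-q_n)\gamma_n(x)(1-q_n)\,d_n$ with $d_n:=((1-q_n)\gamma_n(1_A)(1-q_n))^{-1/2}$ in $(1-q_n)A(1-q_n)$, so that $\beta_n'(1_D)=q_n$ and $\gamma_n'(1_A)=1_A-q_n$ are complementary projections summing to $1_A$. Here $\gamma_n'$ lands in $(1-q_n)A(1-q_n)$ while $\beta_n'$ lands in $q_nAq_n$, so the orthogonality of the ranges is exact; and since all cut-downs and renormalizations move $\beta_n,\gamma_n$ by $o(1)$ in $n$, conditions $(2)$ and $(3)$ of the theorem hold within $\varepsilon$ for large $n$, while the limits in $(4)$ are unaffected. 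Finally $\gamma_n'(1_A)=1_A-q_n$ is a norm-small perturbation of $\gamma_n(1_A)$, so Theorem \ref{thm:2.1}$(1)$ gives $(\gamma_n'(1_A)-\delta)_+\precsim\gamma_n(1_A)\precsim a$ for a small $\delta$; as $\gamma_n'(1_A)$ is a projection, $(\gamma_n'(1_A)-\delta)_+\sim\gamma_n'(1_A)$, whence $\gamma_n'(1_A)\precsim a$, which is $(5)$.

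The step I expect to be the main obstacle is arranging all of these normalizations \emph{exactly and simultaneously while keeping the asymptotic conditions uniform in $n$}: the compression that unitalizes $\alpha$ must not destroy $\lim_n\|\beta_n(xy)-\beta_n(x)\beta_n(y)\|=0$ or $\lim_n\|\beta_n(x)\|=\|x\|$, and the cut-downs by $q_n$ and $1-q_n$ must preserve both the exact orthogonality of the ranges and the comparison $\gamma_n'(1_A)\precsim a$. This is controllable precisely because every error introduced is $o(1)$ as $n\to\infty$, so it is harmless for the limits in $(4)$ and, after passing to large $n$ and invoking Theorem \ref{thm:2.1}, for the comparison in $(5)$; the closure of $\Omega$ under unital hereditary ${\rm C^*}$-subalgebras is exactly what keeps the unitalized domain $D=pBp$ inside $\Omega$.
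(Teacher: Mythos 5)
The paper does not actually prove Theorem \ref{thm:2.12}: it is quoted verbatim from \cite{FL} (Fu--Lin) and used as a black box, so there is no in-paper argument to compare yours against. Your reconstruction is the standard perturbation argument one finds in that source and it is essentially correct: the backward direction is a formal weakening of Definition \ref{def:2.11}, and the forward direction correctly uses approximate multiplicativity of $\alpha$ at $1_A$ to produce the projection $p$ and the corner $D=pBp\in\Omega$ (this is exactly where the hereditary-closure hypothesis is needed), then asymptotic multiplicativity of $\beta_n$ to produce $q_n$ and the complementary renormalizations, with the Cuntz comparison in $(5)$ recovered via Theorem \ref{thm:2.1}$(1)$ because $1_A-q_n$ is a projection. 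The only point you should make explicit is that the projections $q_n$ and the inverses $c_n,d_n$ exist only for $n$ beyond some $N$, whereas the theorem asserts its conditions for all $n\in\mathbb{N}$; this is harmless but requires re-indexing the sequence (discarding $n\le N$), and similarly the ``$o(1)$'' errors must be absorbed into $\varepsilon$ by first shrinking $\varepsilon'$ and then passing to the tail.
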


\begin{theorem}\label{thm:2.13} Let $\Omega$ be a class of unital ${\rm C^*}$-algebras  such that $\Omega$ is closed under tensoring with matrix algebras and
 under passing to unital
hereditary ${\rm C^*}$-subalgebras. Let $A$ be a unital
simple ${\rm C^*}$-algebra. Then $A$  is asymptotical tracially in $\Omega$ (i.e. $A\in{\rm AT}\Omega$), if and only if,  for any finite subset $\mathcal{F}\subset A$, for any $\varepsilon>0$ and for any nonzero positive element $a\in A_{+}$, there exist a $\rm C^{*}$-algebra $B$ in $\Omega$, completely positive contractive  maps $\alpha:A\rightarrow B$,
        $\beta_{n}:B\rightarrow A$, and $\gamma_{n}:A\rightarrow A\cap\beta_{n}^{\perp}(B)(n\in \mathbb{N})$ such that

        $(1)$ $\|x-{\varepsilon}\gamma_{n}(x)-\beta_{n}\alpha(x)\|<\varepsilon$, for all $x\in\mathcal{F}$, and for all $n\in{\mathbb{N}}$,

        $(2)$ $\alpha$ is a $(\mathcal{F},2\varepsilon)$-approximate embedding,

        $(3)$ $\mathop{\lim}\limits_{n\rightarrow\infty}\|\beta_{n}(xy)-\beta_{n}(x)\beta_{n}(y)\|=0$ and
        $\mathop{\lim}\limits_{n\rightarrow\infty}\|\beta_{n}(x)\|=\|x\|$, for all $x,~y\in B$,and

        $(4)$ $(\gamma_{n}(1_{A})-\varepsilon)_+\precsim a$, for all $n\in \mathbb{N}$.
\end{theorem}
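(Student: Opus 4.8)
The plan is to prove the two implications separately, after noting that clauses $(1)$ and $(3)$ here are common to both this statement and Definition \ref{def:2.11}/Theorem \ref{thm:2.12}, that clause $(2)$ differs only by the harmless replacement of an $(\mathcal{F},\varepsilon)$-embedding by an $(\mathcal{F},2\varepsilon)$-embedding, and that the only genuine discrepancy is in clause $(4)$: here one asks for $(\gamma_n(1_A)-\varepsilon)_+\precsim a$ in place of $\gamma_n(1_A)\precsim a$. Thus the entire argument reduces to comparing these two forms of the comparison clause, the $\varepsilon$/$2\varepsilon$ discrepancies being absorbed by the outer quantifier ``for any $\varepsilon>0$'' in the usual way (run the hypothesis with $\varepsilon/2$ in place of $\varepsilon$, exactly as at the end of the proof of Theorem \ref{thm:2.5}).

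For the forward implication, assume $A\in{\rm AT}\Omega$. Given $\mathcal{F}$, $\varepsilon$ and $a$, I would invoke Definition \ref{def:2.11} with the same data to produce $B\in\Omega$ and maps $\alpha,\beta_n,\gamma_n$ with $\gamma_n(1_A)\precsim a$. Applying Theorem \ref{thm:2.1}(1) with $a=b=\gamma_n(1_A)$ (so that $\|a-b\|=0<\varepsilon$) yields a contraction $d$ with $(\gamma_n(1_A)-\varepsilon)_+=d\,\gamma_n(1_A)\,d^*$, hence $(\gamma_n(1_A)-\varepsilon)_+\precsim\gamma_n(1_A)\precsim a$. Since clauses $(1)$ and $(3)$ carry over unchanged and an $(\mathcal{F},\varepsilon)$-approximate embedding is a fortiori an $(\mathcal{F},2\varepsilon)$-approximate embedding, all four clauses hold. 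This direction is routine.

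The substantive implication is the converse, and its heart is the passage from $(\gamma_n(1_A)-\varepsilon)_+\precsim a$ to $\gamma_n(1_A)\precsim a$. Here I would argue as in the proof of Theorem \ref{thm:2.12} (compare the one-line proof of Theorem \ref{thm:2.8}): taking $x=y=1_B$ in the asymptotic-multiplicativity and isometry clause $(3)$ forces $\|\beta_n(1_B)-\beta_n(1_B)^2\|\to 0$ and $\|\beta_n(1_B)\|\to 1$, so the spectrum of the positive contraction $\beta_n(1_B)$ concentrates near $\{0,1\}$ and, for a tail of the sequence, $\beta_n(1_B)$ is within a small distance of a projection $p_n$; since clause $(1)$ at $x=1_A$ shows $\gamma_n(1_A)$ is close to $1_A-\beta_n(1_B)$, a small perturbation (using that $\Omega$ is closed under matrix amplification and unital hereditary subalgebras, together with the simplicity of $A$) replaces the data by maps for which $\gamma_n(1_A)$ is a genuine projection $q_n$, at the cost of a controlled increase in the error. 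For such $q_n$ and $0<\varepsilon<1$ one has $(q_n-\varepsilon)_+=(1-\varepsilon)q_n\sim q_n$, so clause $(4)$ gives $q_n=\gamma_n(1_A)\precsim a$, which is clause $(4)$ of Definition \ref{def:2.11}; feeding the resulting data into Theorem \ref{thm:2.12} then certifies $A\in{\rm AT}\Omega$. The main obstacle is precisely this perturbation step---arranging $\gamma_n(1_A)$ to be an exact projection simultaneously along the sequence while preserving clauses $(1)$--$(3)$ with only a bounded loss in $\varepsilon$---which is the technical core of Theorem \ref{thm:2.12} and which I would import rather than reprove.
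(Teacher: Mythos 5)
Your proposal is correct and takes essentially the same route as the paper, whose entire proof is the observation that one can arrange $\gamma_n(1_A)$ to be a projection (via the argument of Theorem \ref{thm:2.12}), after which $(\gamma_n(1_A)-\varepsilon)_+\sim\gamma_n(1_A)$ and the two forms of clause $(4)$ coincide. If anything you are more careful than the paper, which simply writes ``By Theorem \ref{thm:2.12} we can arrange that $\gamma_n(1_A)$ is a projection'' without noting, as you do, that in the converse direction one must run the perturbation argument on data not yet known to witness membership in ${\rm AT}\Omega$ (the perturbation uses only clauses $(1)$--$(3)$, so this is legitimate).
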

\begin{proof}
By Theorem \ref{thm:2.12}, we can arrange that $\gamma_{n}(1_{A})$ is a projection. Then, we have  $(\gamma_{n}(1_{A})-\varepsilon)_+\thicksim \gamma_{n}(1_{A})$.
\end{proof}

    \begin{lemma}{\rm (\cite{FL}.)}\label{lem:2.14} If the class $\Omega$ is closed under tensoring with matrix algebras and
 under passing to  unital
hereditary ${\rm C^*}$-subalgebras, then  the class of  asymptotically tracially in $\Omega$  is closed under
tensoring with matrix algebras  and under passing to unital hereditary ${\rm C^*}$-subalgebras.
\end{lemma}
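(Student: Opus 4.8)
The plan is to establish the two closure properties separately, in both cases working from the projection-normalized characterization of $\mathrm{AT}\Omega$ in Theorem \ref{thm:2.12}, since having $\beta_n(1_D)$ and $\gamma_n(1_A)$ be orthogonal projections with $\beta_n(1_D)+\gamma_n(1_A)=1_A$ makes the range-orthogonality bookkeeping tractable. I will repeatedly use that both tensoring a completely positive contractive map with $\mathrm{id}_{M_k}$ and compressing it by a projection again yield completely positive contractive maps, and that these operations interact well with the asymptotic multiplicativity and norm conditions in (4) of Theorem \ref{thm:2.12}. A structural point I will exploit is that, in Theorem \ref{thm:2.12}, $\gamma_n(A)\subseteq f_nAf_n$ and $\beta_n(D)\subseteq g_nAg_n$ \emph{exactly}, where $f_n:=\gamma_n(1_A)$ and $g_n:=\beta_n(1_D)$ are orthogonal projections; so $\gamma_n(A)\perp\beta_n(D)$ exactly.

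For closure under $\otimes M_k$: given $A\in\mathrm{AT}\Omega$, a finite set $\mathcal F\subseteq M_k(A)$, $\varepsilon>0$, and a nonzero $a\in M_k(A)_+$, I would first extract the finite set $\mathcal F'\subseteq A$ of all matrix entries of elements of $\mathcal F$, and invoke the standard fact that in an (infinite-dimensional) simple $C^*$-algebra one can find a nonzero $a_0\in A_+$ with $a_0\otimes 1_{M_k}\precsim a$ in $M_k(A)$. Applying Theorem \ref{thm:2.12} to $A$ with $\mathcal F'$, a suitable $\varepsilon'$ and $a_0$ produces $D\in\Omega$ and maps $\alpha,\beta_n,\gamma_n$. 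I then set $\widetilde D=D\otimes M_k$, which lies in $\Omega$ by hypothesis, and amplify $\widetilde\alpha=\alpha\otimes\mathrm{id}$, $\widetilde\beta_n=\beta_n\otimes\mathrm{id}$, $\widetilde\gamma_n=\gamma_n\otimes\mathrm{id}$. All conditions transfer: complete positivity and contractivity are preserved, $\widetilde\alpha$ is again an approximate embedding, the ranges of $\widetilde\gamma_n$ and $\widetilde\beta_n$ stay orthogonal since $\gamma_n(A)\otimes M_k\perp\beta_n(D)\otimes M_k$, and the amplified sequence is still asymptotically multiplicative and asymptotically isometric (an asymptotically multiplicative, asymptotically isometric sequence amplifies to one). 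Finally $\widetilde\gamma_n(1_{M_k(A)})=\gamma_n(1_A)\otimes 1_{M_k}\precsim a_0\otimes 1_{M_k}\precsim a$, giving the final Cuntz condition.

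For closure under unital hereditary subalgebras: write $B=pAp$ with $p=1_B$ a projection, and fix $\mathcal F\subseteq B$, $\varepsilon>0$, and nonzero $a\in B_+\subseteq A_+$. I would apply Theorem \ref{thm:2.12} to $A$ with the enlarged finite set $\mathcal F\cup\{p\}$, a small $\varepsilon'$, and $a$, obtaining $D\in\Omega$ and $\alpha,\beta_n,\gamma_n$ with $f_n,g_n$ orthogonal projections as above. Since $\alpha$ is an approximate embedding and $p^2=p$, the element $\alpha(p)$ is close to a projection $q\in D$; the corner $qDq$ is a unital hereditary subalgebra of $D$, hence lies in $\Omega$, and is the new building block. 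I then define the compressed maps $\alpha_B(x)=q\alpha(x)q$ from $B$ to $qDq$, $\beta_{n,B}(y)=p\beta_n(y)p$ from $qDq$ to $B$, and $\gamma_{n,B}(x)=p\gamma_n(x)p$ on $B$. Using $p\approx\gamma_n(p)+\beta_n\alpha(p)$ together with the exact inclusions $\gamma_n(A)\subseteq f_nAf_n$, $\beta_n(D)\subseteq g_nAg_n$ and $f_n\perp g_n$, one checks that $\beta_{n,B}$ remains asymptotically multiplicative and asymptotically isometric on $qDq$, that $\|x-\gamma_{n,B}(x)-\beta_{n,B}\alpha_B(x)\|<\varepsilon$ on $\mathcal F$ (here one uses $q\alpha(x)q\approx\alpha(x)$ by approximate multiplicativity of $\alpha$), and that $\gamma_{n,B}(1_B)=p\gamma_n(p)p\precsim\gamma_n(p)\precsim\gamma_n(1_A)=f_n\precsim a$, the last subequivalences using monotonicity of $\gamma_n$, the fact that compression by $p$ only decreases in the Cuntz order, and that a subequivalence in $A$ between elements of $B$ also holds in $B$.

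The main obstacle is the exact range-orthogonality demanded by Definition \ref{def:2.11}: the compressed maps satisfy $\gamma_{n,B}(x)\beta_{n,B}(y)\approx 0$ only approximately, because the term $f_n\,p\,g_n$ is small (of order $\|p-\gamma_n(p)-\beta_n\alpha(p)\|$) but not exactly zero, so $p$ does not fully separate the $f_n$- and $g_n$-corners. To obtain $\gamma_{n,B}$ with range genuinely in $B\cap\beta_{n,B}(qDq)^\perp$, I would cut $\gamma_{n,B}$ down by a quasicentral approximate unit (or a functional-calculus cut-off) of the hereditary subalgebra generated by $\beta_{n,B}(qDq)$, pushing its range into the exact orthogonal complement at the cost of an error tending to $0$ that can be absorbed into $\varepsilon$; the resulting error element still satisfies $(\gamma_{n,B}(1_B)-\varepsilon)_+\precsim a$, so that Theorem \ref{thm:2.13} delivers membership in $\mathrm{AT}\Omega$. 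Controlling this clean-up simultaneously with the $n\to\infty$ limits in the asymptotic multiplicativity and norm conditions is the delicate point of the argument.
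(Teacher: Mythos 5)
The paper does not actually prove Lemma \ref{lem:2.14}; it is quoted from Fu--Lin \cite{FL} without argument, so there is no in-paper proof to compare against and I can only judge your proposal on its merits. Your matrix-algebra half is essentially correct: the amplification $\widetilde\alpha=\alpha\otimes\mathrm{id}$, $\widetilde\beta_n=\beta_n\otimes\mathrm{id}$, $\widetilde\gamma_n=\gamma_n\otimes\mathrm{id}$ preserves all the conditions of Theorem \ref{thm:2.12}, and the structural observation $\gamma_n(A)\subseteq f_nAf_n$, $\beta_n(D)\subseteq g_nAg_n$ (from $\gamma_n(a)\le\|a\|f_n$ for $a\ge 0$, etc.) is right. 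The only caveat is that producing $a_0\in A_+$ with $a_0\otimes 1_{M_k}\precsim a$ needs $A$ infinite dimensional (take $k$ pairwise orthogonal, pairwise equivalent nonzero positive elements under a nonzero $d\otimes e_{11}\precsim a$); for finite-dimensional $A$ the claim is false as stated, so this hypothesis should be made explicit.

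The hereditary-subalgebra half has a genuine gap, and it sits exactly where you flag it, but your proposed repair does not close it, and there is a second problem you do not mention. First, cutting $\gamma_{n,B}$ down by a quasicentral approximate unit or a functional-calculus cut-off of the hereditary subalgebra generated by $\beta_{n,B}(qDq)$ yields only \emph{approximate} orthogonality: for a fixed cut-off $e$, $(1_B-e)c(1_B-e)$ need not annihilate that hereditary subalgebra. Definition \ref{def:2.11} demands that $\gamma_n$ land in the exact complement $A\cap\beta_n^{\perp}(B)$, and Theorem \ref{thm:2.13} relaxes only the Cuntz condition $(4)$, not the orthogonality, so neither accepts an approximately orthogonal $\gamma_{n,B}$. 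The repair that works replaces $\gamma_n(p)\in f_nAf_n$ and $\beta_n\alpha(p)\in g_nAg_n$ (which are exactly orthogonal and approximately idempotent) by honest orthogonal projections $p_1^{(n)}\le f_n$, $p_2^{(n)}\le g_n$ with $p_1^{(n)}+p_2^{(n)}$ unitarily equivalent to $p$ via some $u_n$ close to $1_A$, and compresses by these; one must then conjugate $\beta_n,\gamma_n$ (not $\alpha$ or the building block, which may not depend on $n$) by $u_n$. Second, and more seriously: with a single fixed tolerance $\varepsilon'$ in the initial application of $A\in{\rm AT}\Omega$, the compressed maps satisfy only $\limsup_n\|\beta_{n,B}(yy')-\beta_{n,B}(y)\beta_{n,B}(y')\|\le C\sqrt{\varepsilon'}$ (the defect comes from $\|\gamma_n(p)(1-p)\|\le\sqrt{\varepsilon'}$ and $\|q-\alpha(p)\|\lesssim\varepsilon'$, which do not improve as $n\to\infty$), so condition $(3)$ of Definition \ref{def:2.11}, which requires the limits to be exactly $0$, fails. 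One is forced to run the whole construction over a sequence of tolerances $\varepsilon'_m\to 0$ and reindex diagonally; your proposal treats $\varepsilon'$ as a single small constant and therefore cannot deliver the asymptotic conditions as stated.
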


The following theorem is Theorem 2.3.13 in \cite{L2}.

\begin{theorem}\label{thm:2.15}{\rm (\cite{L2}.)}
Let $A$ be  ${\rm C^*}$-algebra, $B$ be a ${\rm C^*}$-subalgebra of $A$ and $C$
be another ${\rm C^*}$-algebra. Suppose that $\phi:B\to C$ is a contractive completely positive linear map. If either $B$ or $C$ is amenable, the for any finite subset $\mathcal{G}\subset B$ and $\varepsilon>0$, there is a contractive completely positive linear map $\widetilde{\phi}:A\to C$  such that $$\|\widetilde{\phi}-\phi\|<\varepsilon~~ \rm {on}~ \mathcal{G}.$$
\end{theorem}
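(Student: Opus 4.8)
The plan is to deduce Theorem \ref{thm:2.15} from two standard ingredients: Arveson's extension theorem, which extends a completely positive map from a \SCA{} into $B(H)$ to the whole algebra without enlarging the norm; and the completely positive approximation property, which says that a \CA{} is amenable (nuclear) exactly when the identity map factors, point-norm approximately, through matrix algebras via completely positive contractions. Since Arveson's theorem gives an \emph{exact} extension only when the codomain is injective, and a general amenable $C$ need not be injective, the strategy is always to route the extension through some matrix algebra ${\rm M}_k$, which \emph{is} injective (being a finite-dimensional $B(H)$), and then compose. I would split the argument into two cases according to whether $B$ or $C$ carries the amenability hypothesis.

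First, suppose $C$ is amenable. Then $\{\phi(x):x\in\mathcal{G}\}$ is a finite subset of $C$, so by the completely positive approximation property I would choose a matrix algebra ${\rm M}_k$ and completely positive contractive maps $\sigma:C\to {\rm M}_k$ and $\rho:{\rm M}_k\to C$ with $\|\rho\sigma(\phi(x))-\phi(x)\|<\varepsilon$ for all $x\in\mathcal{G}$. The composition $\sigma\phi:B\to {\rm M}_k$ is completely positive and contractive, and since ${\rm M}_k=B(\mathbb{C}^k)$, Arveson's theorem furnishes a completely positive contractive extension $\Phi:A\to {\rm M}_k$ with $\Phi|_B=\sigma\phi$. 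I would then set $\widetilde{\phi}=\rho\Phi:A\to C$; this is completely positive and contractive, and for $x\in\mathcal{G}$ one has $\widetilde{\phi}(x)=\rho\sigma\phi(x)$, which lies within $\varepsilon$ of $\phi(x)$ by the choice of $\sigma,\rho$.

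Next, suppose instead that $B$ is amenable. Applying the approximation property to the finite set $\mathcal{G}\subset B$ yields a matrix algebra ${\rm M}_k$ and completely positive contractive maps $\sigma:B\to {\rm M}_k$, $\rho:{\rm M}_k\to B$ with $\|\rho\sigma(x)-x\|<\varepsilon$ for all $x\in\mathcal{G}$. Here I would extend $\sigma$ (not $\sigma\phi$) by Arveson to a completely positive contractive map $\widetilde{\sigma}:A\to {\rm M}_k$ with $\widetilde{\sigma}|_B=\sigma$, and then define $\widetilde{\phi}=\phi\rho\widetilde{\sigma}:A\to C$, a composition of completely positive contractions. For $x\in\mathcal{G}$ we have $\widetilde{\sigma}(x)=\sigma(x)$, hence $\widetilde{\phi}(x)=\phi\rho\sigma(x)$, and contractivity of $\phi$ gives $\|\widetilde{\phi}(x)-\phi(x)\|=\|\phi(\rho\sigma(x)-x)\|\leq\|\rho\sigma(x)-x\|<\varepsilon$.

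The step I expect to be the crux is precisely the interposition of ${\rm M}_k$: Arveson's theorem will not extend $\phi$ directly into a non-injective $C$, so amenability must be used to replace $C$ (or $B$) by a matrix algebra while keeping the composition error under $\varepsilon$. Once that reduction is secured, everything else is routine, since complete positivity and contractivity are preserved under composition and the Arveson extension is norm-nonincreasing, so no error beyond the single $\varepsilon$ from the approximation property is incurred. A minor point to handle carefully is the non-unital case, where I would read the approximation property and Arveson's theorem on the unitizations (or via an approximate unit) to justify that the extensions may be taken contractive.
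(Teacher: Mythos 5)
Your proposal is correct and follows essentially the same route as the standard argument for this result: the paper gives no proof of its own, merely citing Theorem 2.3.13 of Lin's book, whose proof is exactly this factorization through a matrix algebra via the completely positive approximation property combined with Arveson's extension theorem applied to the map into the (injective) matrix algebra. Both of your cases, and the remark about handling the non-unital situation via unitizations, match the cited argument.
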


\begin{definition} \label{def:2.16}Let $A$ and $B$ be  ${\rm C^*}$-algebra,  let $\varphi:A\to B$
be a map, let $\mathcal{G}\subset A$, and let $\varepsilon>0$. The map $\varphi$ is called $\mathcal{G}$-$\varepsilon$-multiplicative, or called $\varepsilon$-multiplicative on $\mathcal{G}$, if
for any $x,y\in F$, $\|\varphi(xy)-\varphi(x)\varphi(y)\|<\varepsilon$. If, in addition, for any
$x\in \mathcal{G}$, $|\|\varphi(x)\|-\|x\||<\varepsilon$, then we say $\varphi$ is an $\mathcal{G}$-$\varepsilon$-approximate embedding.
\end{definition}

\begin{definition} \label{def:2.17} {\rm (\cite{FL}.)} Let $A$ and $B$ be  ${\rm C^*}$-algebra and let
 $\varphi:A\to B$ be a map. let $\varepsilon>0$. If, for any $a_1,a_2\in A_+^1$ with $a_1a_2=0$, we have $\|\varphi(a_1)\varphi(a_2)\|\leq \varepsilon$, then we say $\varphi$ is an $\varepsilon$-almost order zero map.
 \end{definition}

\begin{definition}\label{def:2.18} {\rm (\cite{FL}.)} Let $A$  be a  ${\rm C^*}$-algebra and let $F$ be a finite dimensional  ${\rm C^*}$-algebra. Let
$\varphi:F\to A$ be an integer. The map $\varphi$ is called $(n-\varepsilon)$-dividable if $F$ can be written as $F=F_0\oplus F_1\oplus\cdots \oplus F_n$ (where $F_i$ are ideals of $F$) such that $\varphi|F_i$ is a completely
 \end{definition}

    \begin{theorem}{\rm (\cite{FL}.)}\label{thm:2.19}
    	For any finite dimension $\rm C^{*}$-algebra $F$ and any $\varepsilon>0$,there exists $\delta>0$ such that,
    	for any $\rm C^{*}$-algebra $A$ and any completely positive  contractive map $\varphi:F\rightarrow A$ which is $\delta$-almost order zero,
    	there exist a completely positive  contractive order zero map $\psi:F\rightarrow A$ satisfying $\|\varphi-\psi\|\leq\varepsilon$.
    \end{theorem}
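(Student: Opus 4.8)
The plan is to prove this by contradiction, passing to a reduced product of ${\rm C^*}$-algebras and invoking the projectivity of the cone over $F$. The essential input is the Winter--Zacharias correspondence (\cite{WW3}): for any ${\rm C^*}$-algebra $A$, completely positive contractive order zero maps $\psi:F\to A$ are in bijection with $*$-homomorphisms $\rho:C_0(0,1]\otimes F\to A$, the bijection being $\psi(x)=\rho(\iota\otimes x)$ where $\iota\in C_0(0,1]$ denotes the function $\iota(t)=t$. Since $\|\iota\|=1$ and $\rho$ is contractive, the resulting $\psi$ is automatically completely positive and contractive.

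Suppose the conclusion fails for some finite dimensional $F$ and some $\varepsilon>0$. Negating the statement and taking $\delta=\frac{1}{n}$, for each $n\in\mathbb{N}$ we obtain a ${\rm C^*}$-algebra $A_n$ and a completely positive contractive, $\frac{1}{n}$-almost order zero map $\varphi_n:F\to A_n$ such that $\|\varphi_n-\psi\|>\varepsilon$ for every completely positive contractive order zero map $\psi:F\to A_n$. Set $B=\prod_n A_n$, $J=\bigoplus_n A_n$, and let $\pi:B\to B/J$ be the quotient map. As each $\varphi_n$ is contractive, the $\varphi_n$ assemble to a completely positive contractive map $\Phi:F\to B$, $\Phi(x)=(\varphi_n(x))_n$, and I put $\overline\Phi=\pi\circ\Phi:F\to B/J$. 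I would first verify that $\overline\Phi$ is a genuine order zero map: if $a,b\in F_+$ are contractions with $ab=0$, then Definition \ref{def:2.17} gives $\|\varphi_n(a)\varphi_n(b)\|\le\frac{1}{n}$, so $\Phi(a)\Phi(b)\in J$ and hence $\overline\Phi(a)\overline\Phi(b)=0$; homogeneity removes the contraction restriction, so $\overline\Phi$ is completely positive contractive order zero.

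By the correspondence above, $\overline\Phi$ is induced by a $*$-homomorphism $\overline\rho:C_0(0,1]\otimes F\to B/J$. The crux of the argument is that $C_0(0,1]\otimes F$, the cone over the finite dimensional algebra $F$, is a projective ${\rm C^*}$-algebra. Projectivity provides a $*$-homomorphism $\rho:C_0(0,1]\otimes F\to B$ with $\pi\circ\rho=\overline\rho$. Writing $\rho=(\rho_n)_n$ coordinatewise and setting $\psi_n(x)=\rho_n(\iota\otimes x)$, each $\psi_n:F\to A_n$ is completely positive contractive order zero. For $x\in F$ we have $(\psi_n(x))_n=\rho(\iota\otimes x)$, so $\pi\big((\psi_n(x))_n\big)=\overline\rho(\iota\otimes x)=\overline\Phi(x)=\pi\big((\varphi_n(x))_n\big)$; hence $(\psi_n(x)-\varphi_n(x))_n\in J$, i.e.\ $\|\psi_n(x)-\varphi_n(x)\|\to0$. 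Since $F$ is finite dimensional this pointwise convergence (checked on a finite spanning set) upgrades to $\|\psi_n-\varphi_n\|\to0$. For $n$ large we then have a completely positive contractive order zero map $\psi_n$ with $\|\varphi_n-\psi_n\|<\varepsilon$, contradicting the choice of $\varphi_n$.

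The main obstacle is precisely the projectivity of the cone $C_0(0,1]\otimes F$, and this is where the finite dimensionality of $F$ is indispensable: $F$ itself is usually not projective, since projections need not lift along quotients, so one cannot separately lift the $*$-homomorphism and the supporting positive element appearing in the order zero structure theorem. The point of working with the cone is that it converts the obstruction to lifting projections into the always-solvable problem of lifting a positive contraction together with its matrix structure, and the established projectivity of $C_0(0,1]\otimes F$ (in the spirit of Loring's relation-lifting theory, and underlying \cite{WW3}) packages this exactly. I would regard assembling and citing this projectivity cleanly, together with checking that the order zero property is detected in the reduced product, as the delicate points; the remaining verifications are routine.
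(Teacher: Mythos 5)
The paper gives no proof of this statement---it is quoted verbatim from \cite{FL}---so there is nothing internal to compare against; your argument is correct and is essentially the standard proof underlying the cited result. The contradiction via the reduced product $\prod_n A_n/\bigoplus_n A_n$, the Winter--Zacharias correspondence between completely positive contractive order zero maps and $*$-homomorphisms from $C_0(0,1]\otimes F$, and Loring's projectivity of the cone over a finite dimensional ${\rm C^*}$-algebra are exactly the right ingredients, and you use the finite dimensionality of $F$ correctly in both places where it is needed (projectivity of the cone, and upgrading pointwise to uniform convergence of $\psi_n-\varphi_n$).
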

	
	\section{the main result}
	
	\begin{theorem} \label{thm:3.1}Let $\Omega$ be a class of  unital
$\rm {C^*}$-algebras.  Then  $A$ is asymptotically tracially in $\Omega$ for  any   unital  simple separable infinite dimensional amenable $\rm {C^*}$-algebra $A\in {{\rm TA}}({\rm AT}\Omega)$.
\end{theorem}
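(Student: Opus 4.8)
The plan is to run a two-layer tracial approximation and splice the layers. The outer layer comes from $A\in{\rm TA}({\rm AT}\Omega)$: it produces a projection $p$ and a unital simple subalgebra $C\subseteq A$ with $1_C=p$ and $C\in{\rm AT}\Omega$. The inner layer comes from applying the ${\rm AT}\Omega$-property of $C$ itself (Definition \ref{def:2.11}), producing a $B\in\Omega$ together with maps $\alpha^C\colon C\to B$, $\beta_n^C\colon B\to C$ and $\gamma_n^C\colon C\to C\cap(\beta_n^C)^\perp(B)$. I then assemble a single ${\rm AT}\Omega$-decomposition for $A$ out of these.

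First I would fix $\mathcal F$ (with $\|x\|\le1$), $\varepsilon>0$ and nonzero $a\in A_+$, and split $a$ by functional calculus into orthogonal nonzero positive $a^{(0)},a^{(1)}$ with $a^{(0)}+a^{(1)}\le a$. Applying the strengthened tracial approximation of Theorem \ref{thm:2.5} to a finite set $\mathcal G\supseteq\mathcal F\cup\{a^{(1)}\}$ (closed enough under products), a small tolerance $\varepsilon_1$, and the element $a^{(0)}$, I obtain $p$, $C$ with $\|[p,x]\|<\varepsilon_1$, elements $y_x\in C$ with $\|pxp-y_x\|<\varepsilon_1$, the estimate $1-p\precsim a^{(0)}$, and crucially the norm control $\|pxp\|\ge\|x\|-\varepsilon_1$. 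Writing $a_1':=y_{a^{(1)}}\ge0$ and setting $a_1:=(a_1'-\delta)_+\in C_+$ for a suitable $\delta$, Theorem \ref{thm:2.1}(1) together with $pa^{(1)}p\precsim a^{(1)}$ gives $a_1\precsim a^{(1)}$, while the norm estimate forces $\|a_1'\|\ge\|a^{(1)}\|-\varepsilon_1>\delta$, so $a_1\neq0$. This manufactured element $a_1\in C_+$, nonzero yet Cuntz-below $a^{(1)}$ in $A$, is exactly what lets the inner defect be absorbed.

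Next I would feed $C\in{\rm AT}\Omega$ the finite set $\mathcal F_C=\{y_x\}$ (and products), a tolerance $\varepsilon_2$, and the positive element $a_1$, obtaining $B,\alpha^C,\beta_n^C,\gamma_n^C$ with $\gamma_n^C(1_C)\precsim a_1$. Using the amenability of $A$ (equivalently of the corner $pAp$) together with the completely positive extension result Theorem \ref{thm:2.15}, I would build a unital cpc map $\theta\colon A\to C$ with $\theta(x)$ within $\varepsilon_1$ of $pxp$, hence of $y_x$, for $x\in\mathcal G$. I then set $\alpha:=\alpha^C\circ\theta\colon A\to B$, $\beta_n:=\beta_n^C\colon B\to A$, and $\gamma_n(x):=(1-p)x(1-p)\oplus\gamma_n^C(\theta(x))$. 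The two summands of $\gamma_n$ are orthogonal (one lives in $(1-p)A(1-p)$, the other in $C\subseteq pAp$ and is orthogonal to $\beta_n^C(B)$), so $\gamma_n$ is cpc with range in $A\cap\beta_n^\perp(B)$. Condition (1) follows from $x\approx pxp+(1-p)x(1-p)$, from $\theta(x)\approx y_x$, and from the inner estimate $y_x\approx\gamma_n^C(y_x)+\beta_n^C\alpha^C(y_x)$; condition (2) follows since $\theta$ is approximately multiplicative on $\mathcal G$ and $\|pxp\|\ge\|x\|-\varepsilon_1$ makes $\alpha$ an approximate embedding; condition (3) is inherited verbatim from $\beta_n^C$. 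For condition (4), unitality of $\theta$ gives $\gamma_n(1_A)=(1-p)\oplus\gamma_n^C(1_C)\precsim a^{(0)}\oplus a_1\precsim a^{(0)}+a^{(1)}\le a$, adding the two Cuntz subequivalences by orthogonality.

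The main obstacle is exactly the verification of condition (4): a priori the inner defect $\gamma_n^C(1_C)$ is controlled only Cuntz-wise inside $C$, and since $C$ may be a thin subalgebra (purely infinite or elementary), a small element of $C$ need not be Cuntz-below the prescribed small $a^{(1)}\in A$. The device resolving this, and the step I would be most careful with, is to insert $a^{(1)}$ into the outer finite set and invoke the norm control of Theorem \ref{thm:2.5}: this simultaneously makes $pa^{(1)}p$ norm-large (so the extracted $a_1$ is nonzero) and, via $a_1\precsim pa^{(1)}p\precsim a^{(1)}$, Cuntz-dominated by $a^{(1)}$, after which orthogonality of $a^{(0)}$ and $a^{(1)}$ lets the two small errors be summed. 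A secondary technical point is the unital cpc map $\theta\colon A\to C$, which I would produce through Theorem \ref{thm:2.15} using amenability; alternatively, phrasing everything through the $(\gamma_n(1_A)-\varepsilon)_+$ characterization of Theorem \ref{thm:2.13} removes the need to make $\theta$ exactly unital.
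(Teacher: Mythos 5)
Your proposal is correct and follows essentially the same route as the paper's proof: an outer application of Theorem \ref{thm:2.5} with the norm control used to make the compressed second piece of $a$ nonzero in the subalgebra, an inner application of the ${\rm AT}\Omega$ property of that subalgebra against the cut-down $(a_1'-\delta)_+$, a cpc map $A\to C$ obtained from amenability via Theorem \ref{thm:2.15}, and the assembly $\gamma_n(x)=(1-p)x(1-p)+\gamma_n^C(\theta(x))$ with condition (4) verified by adding the two orthogonal Cuntz subequivalences. Your $a^{(0)},a^{(1)},\theta$ correspond exactly to the paper's $b_1,b_2,\alpha'$, so no further comparison is needed.
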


\begin{proof} We need to show that for any
$\varepsilon>0$, any finite
subset  $\mathcal{G}=\{a_1,a_2, $ $\cdots, a_k\}\subseteq A$ (we may assume that $\|a_i\|\leq 1$ for all $1\leq i\leq k$), and any  nonzero positive  element $b$, there exist
a ${\rm C^*}$-algebra $B$  in $\Omega$ and completely positive  contractive linear maps  $\alpha:A\to B$ and  $\beta_n: B\to A$, and $\gamma_n:A\to A$ such that

$(1)$  $\|x-\gamma_n(x)-\beta_n\alpha(x)\|<\varepsilon$, for all $x\in \mathcal{G}$, and for all $n\in {\mathbb{N}}$,

$(2)$ $\alpha$ is a $\mathcal{G}$-$\varepsilon$ approximate embedding,

$(3)$ $\lim_{n\to \infty}\|\beta_n(xy)-\beta_n(x)\beta_n(y)\|=0$ and $\lim_{n\to \infty}\|\beta_n(x)\|=\|x\|$, for all $x,~y\in B$, and

$(4)$ $\gamma_n(1)\precsim a$, for all $n\in \mathbb{N}$.

Since $A$ is an infinite-dimensional simple unital $\rm {C^*}$-algebra there exist nonzero positive elements  $b_1, ~b_2\in A_+$, such that
$b_1b_2=0$ and $b_1+b_2\precsim b$.

Since   $A\in {{\rm TA}}({\rm AT}\Omega)$, by Theorem \ref{thm:2.5}, for  any $\delta>0$ (with $\delta<\varepsilon$),
any finite subset $\mathcal{H}=\{a_1,~a_2, $ $\cdots, a_k,~ b_2, ~a_ia_j,~1\leq i,~j\leq k\}\subseteq A$ , non-zero positive element $b_1$ of $A$,   there exist  a  projection $p\in A$ and a ${\rm C^*}$-subalgebra $B$ of $A$ with
$1_B=p$ and $B\in{\rm AT}\Omega$, such that

$(1)'$ $\|b_2p-pb_2\|<\delta$, $\|a_ip-pa_i\|<\delta$, $\|a_ia_jp-pa_ia_j\|<\delta$, for all $1\leq i,~j\leq k$,

$(2)'$ $pb_2p\in_\delta B$, $pa_ip\in_\delta B$, $pa_ia_jp\in_\delta B$, for
all  $1\leq i,~j\leq k$,

 $(3)'$ $1-p\precsim b_1$, and

 $(4)'$ $\|pa_ip\|\geq \|a_i\|-\varepsilon$, $\|pa_ia_jp\|\geq \|a_ia_j\|-\varepsilon$,  for all  $1\leq i,~j\leq k$.

By $(2)'$  there exist $a_i',~ b_2'\in B$ such that
 $$\|pa_ip-a_i'\|<\delta, \|b_2-b_2'\|<\delta$$
for all $1\leq i\leq k$.
Since $A$ is a amenable $\rm {C^*}$-algebra and $B\subseteq A$, by Theorem  \ref{thm:2.15}, there exists
a   completely positive  contractive linear map $\alpha':A\to B$ such that
$$\|\alpha'(a_i')-a_i'\|<\delta,~\|\alpha'(a_i'a_j')-a_i'a_j\|<\delta $$ for all $1\leq i, ~j\leq k$.

Since $B\in \rm {AT}\Omega$, by Theorem \ref{thm:2.12},  for $\mathcal{H'}=\{a_1', a_2',\cdots, a_k'\}$, any $\delta''>0$ (with $\delta''<\delta$), positive element $(b_2'-\delta)_+$ (we can assume that $(b_2'-\delta)_+\neq 0$),
 there exist
a ${\rm C^*}$-algebra $D$  in $\Omega$ and  completely positive  contractive linear maps  $\alpha'':B\to D$ and  completely positive  contractive linear maps $\beta_n'': D\to B$, and $\gamma_n'':B\to B\cap\beta_n''(D)^{\perp}$ such that

$(1)''$ the map $\alpha''$ is unital  completely positive   linear map,  $\beta_n''(1_D)$ and $\gamma_n''(1_B)$ are projections and  $\beta_n''(1_D)+\gamma_n''(1_B)=1_B$, for all $n\in \mathbb{N}$,

$(2)''$ $\|a_i'-\gamma_n''(a_i')-\beta_n''(\alpha''(a_i'))\|<\delta''$, for all $1\leq i\leq k$, and for all $n\in {\mathbb{N}}$,

$(3)''$ $\alpha''$ is an $\mathcal{H'}$-$\delta''$ approximate embedding,

$(4)''$ $\lim_{n\to \infty}\|\beta_n''(xy)-\beta_n''(x)\beta_n''(y)\|=0$, and $\lim_{n\to \infty}\|\beta_n''(x)\|=\|x\|$, for all $x,~y\in D$, and

$(5)''$ $\gamma_n''(1_B)\precsim (b_2'-\delta)_+$, for all $n\in \mathbb{N}$.

 Define  $\gamma_n:A\to A$, by $\gamma_n(a)=(1-p)a(1-p)+\gamma_n''\alpha'(pap)$,    $\alpha: A\to D$, by $\alpha(a)=\alpha''\alpha'(pap)$ and  $\beta_n:D\to A$ by $\beta_n(a)=\beta_n''(a)$. Then we have   $\gamma_n$,  $\beta_n$ and $\alpha$ are  completely positive  contractive linear maps (since for any $a\in A$, $(1-p)a(1-p)\gamma_n''\alpha'(pap)=0$).

 We also have
$$\begin{array}{ll}
&$(1)$~~~~~\|a_i-\gamma_n(a_i)-\beta_n\alpha(a_i)\|\\
&= \|a_i-(1-p)a_i(1-p)-\gamma''\alpha'(pa_ip)-\beta_n''\alpha'' \alpha'(pa_ip)\|\\
&\leq \|a_i-(1-p)a_i(1-p)-pa_ip\|+\|pa_ip-\gamma''\alpha'(pa_ip)-\beta_n''\alpha'' \alpha'(pa_ip)\|\\
&\leq 2\delta+\|pa_ip-a_i'\|+\|a_i'-\gamma''\alpha'(pa_ip)-\beta_n''\alpha'' \alpha'(pa_ip)\|\\
&=2\delta+\delta+\|a_i'-\gamma''(a_i')-\beta_n''\alpha'' (a_i')\|+\|\gamma''\alpha'(pa_ip)- \gamma''\alpha'(a_i')\|\\
&+\|\gamma''\alpha'(a_i')- \gamma''(a_i')\|
+\|\beta_n''\alpha'' \alpha'(pa_ip)-\beta_n''\alpha'' \alpha'(a_i')\|\\
 &+\|\beta_n''\alpha'' \alpha'(a_i')-\beta_n''\alpha'' (a_i')\|<3\delta+\delta''+\delta+\delta+\delta+\delta<\varepsilon,
\end{array}$$
for all $1\leq i,j\leq k$.

 $$\begin{array}{ll}
&$(2)$~~~~~\|\alpha(a_ia_j)-\alpha(a_i)\alpha(a_j)\|\\
&= \|\alpha''\alpha'(pa_ia_jp)-\alpha''\alpha'(pa_ip)\alpha''\alpha'(pa_jp)\|\\
&\leq \|\alpha''\alpha'(pa_ia_jp)-\alpha''\alpha'(a_i'a_j')\|
+\|\alpha''\alpha'(a_i'a_j')-\alpha''(\alpha'(a_i')\alpha'(a_j'))\|\\
&+\|\alpha''(\alpha'(a_i')\alpha'(a_j'))-\alpha''\alpha'(a_i')\alpha''\alpha'(a_j')\|\\
&+\|\alpha''\alpha'(a_i')\alpha''\alpha'(a_j')-\alpha''\alpha'(pa_ip)
\gamma''\alpha'(pa_jp)\|\\
&<\delta+\delta+2\delta+\delta<\varepsilon,
\end{array}$$
 for all $1\leq i,j\leq k$.

 We also  have $\|\alpha(a_i)\|=\|\alpha''\alpha'(pa_ip)\|\geq \|\alpha'(pa_ip)\|-\delta''\geq \|\alpha'(a_i)\|-\delta''-\delta\geq\|a_i'\|-\delta''-2\delta\geq\|pa_ip\|-\delta''
 -3\delta\geq\|a_i\|-\delta''-4\delta>\|a_i\|-\varepsilon.$

 Therefore, $\alpha$ is a  $\mathcal{G}$-$\varepsilon$ approximate embedding.

 $(3)$ $\lim_{n\to \infty}\|\beta_n(xy)-\beta_n(x)\beta_n(y)\|=0$ and $\lim_{n\to \infty}\|\beta_n(x)\|=\|x\|$, for all $x,~y\in D$, and

 $(4)$ $\gamma_n(1_A)=1-p+\gamma_n''\alpha'(p1_Ap)\precsim b_1\oplus\gamma_n''(1_B)\precsim b_1\oplus(b_2'-\delta)_+\precsim b_1+b_2\precsim b$, for all $n\in \mathbb{N}$.
\end{proof}

	\begin{theorem} \label{thm:3.2}
		Let $\Omega$ be a class of unital $\rm C^{*}$-algebras which have tracial nuclear dimension at most $n$. Let  $A$ be an infinite dimensional  simple unital $\rm C^{*}$-algebra and $A$   is asymptotical tracially in $\Omega$ (i.e. $A\in \rm AT\Omega$). Then ${\rm Tdim_{nuc}}(A)\leq n$.	
	\end{theorem}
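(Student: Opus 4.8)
The plan is a two-step composition: first approximate $A$ asymptotically tracially by a single $B\in\Omega$, then push the resulting data through the tracial nuclear dimension of $B$. Since $A$ is infinite dimensional, simple and unital, I first fix nonzero positive $b_1,b_2\in A_+$ with $b_1b_2=0$ and $b_1+b_2\precsim a$. Enlarging $\mathcal F$ so that it contains $b_2$ together with all products $x_ix_j$ of its elements, I apply the characterization of $\mathrm{AT}\Omega$ in Theorem \ref{thm:2.12} to $A$, with a small tolerance $\delta<\varepsilon$ and the positive element $b_1$. This gives $B\in\Omega$, a unital completely positive contractive map $\alpha_0\colon A\to B$, and completely positive contractive maps $\beta_n^0\colon B\to A$ and $\gamma_n^0\colon A\to A\cap\beta_n^0(B)^\perp$ such that $\beta_n^0(1_B)$ and $\gamma_n^0(1_A)$ are complementary projections, $\gamma_n^0(1_A)\precsim b_1$, the maps $\beta_n^0$ are asymptotically multiplicative and asymptotically isometric, and $\|x-\gamma_n^0(x)-\beta_n^0\alpha_0(x)\|<\delta$ for $x\in\mathcal F$.

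Next, because $B\in\Omega$ satisfies $\mathrm{Tdim_{nuc}}(B)\le n$, I apply Definition \ref{def:2.7} to $B$ with the finite set $\alpha_0(\mathcal F)$, a small tolerance $\delta'<\delta$, and the positive element $c:=\alpha_0(b_2)$, which is nonzero since $\alpha_0$ is approximately isometric on $b_2\in\mathcal F$. This produces a finite dimensional $F=F^{(0)}\oplus\cdots\oplus F^{(n)}$, completely positive contractive maps $\alpha'\colon B\to F$ and $\gamma'\colon B\to B\cap\beta'(F)^\perp$, and an $n$-decomposable completely positive map $\beta'\colon F\to B$ whose restriction to each $F^{(i)}$ is order zero, with $\gamma'(1_B)\precsim c$ and $\|y-\gamma'(y)-\beta'\alpha'(y)\|<\delta'$ for $y\in\alpha_0(\mathcal F)$. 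I then set $\alpha:=\alpha'\alpha_0\colon A\to F$, $\beta_n:=\beta_n^0\beta'\colon F\to A$, and $\gamma_n(x):=\gamma_n^0(x)+\beta_n^0(\gamma'(\alpha_0(x)))$; the two summands of $\gamma_n(1_A)$ are orthogonal, so $\gamma_n$ is completely positive contractive. A telescoping estimate, using $\gamma'(y)+\beta'\alpha'(y)\approx y$ inside $B$ for $y=\alpha_0(x)$ and then $\gamma_n^0(x)+\beta_n^0\alpha_0(x)\approx x$, gives $\|x-\gamma_n(x)-\beta_n\alpha(x)\|<\varepsilon$ on $\mathcal F$ for all large $n$.

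Two repairs are then needed. First, $\beta_n$ is only asymptotically $n$-decomposable: since each $\beta'|_{F^{(i)}}$ is order zero and $\beta_n^0$ is asymptotically multiplicative, each $\beta_n|_{F^{(i)}}=\beta_n^0\circ\beta'|_{F^{(i)}}$ is $\delta$-almost order zero for large $n$, so Theorem \ref{thm:2.19} lets me perturb it, at arbitrarily small cost, to a genuine completely positive contractive order zero map; this yields an honestly $n$-decomposable (nonzero, piecewise contractive) $\tilde\beta_n$ with $\|\tilde\beta_n-\beta_n\|$ small, which I use in place of $\beta_n$. Second, for the trace-size condition I use that $\gamma_n^0(1_A)\perp\beta_n^0(B)$, so $\gamma_n(1_A)\sim\gamma_n^0(1_A)\oplus\beta_n^0(\gamma'(1_B))$; from $\gamma'(1_B)\precsim\alpha_0(b_2)$ and the asymptotic multiplicativity of $\beta_n^0$, Theorem \ref{thm:2.1} yields $(\beta_n^0(\gamma'(1_B))-\eta)_+\precsim\beta_n^0(\alpha_0(b_2))$, and since $b_2\approx\gamma_n^0(b_2)+\beta_n^0\alpha_0(b_2)$ with orthogonal summands, parts (1) and (3) of Theorem \ref{thm:2.1} give $(\beta_n^0\alpha_0(b_2)-\eta)_+\precsim b_2$. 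Combining, $(\gamma_n(1_A)-\eta')_+\precsim b_1\oplus b_2\sim b_1+b_2\precsim a$, which is exactly the form of condition (2) supplied by Theorem \ref{thm:2.8}; passing through that $(\,\cdot-\varepsilon)_+$ reformulation is the cleanest way to absorb the accumulated approximations.

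The main obstacle is the structural requirement in Definition \ref{def:2.7} that $\gamma_n$ map \emph{exactly} into $A\cap\tilde\beta_n(F)^\perp$. The first summand $\gamma_n^0(A)$ is already exactly orthogonal to $\beta_n^0(B)\supseteq\beta_n^0(\beta'(F))$, but the exact orthogonality $\gamma'(B)\perp\beta'(F)$ holding in $B$ only transports through the asymptotically multiplicative $\beta_n^0$ to \emph{approximate} orthogonality of the second summand $\beta_n^0(\gamma'(\alpha_0(A)))$ against $\tilde\beta_n(F)$. I expect to close this gap by compressing $\gamma_n$ with the support data of the order zero blocks $\tilde\beta_n|_{F^{(i)}}$, replacing $\gamma_n$ by its cut-down to the annihilator of $\tilde\beta_n(F)$ and folding the resulting small perturbation into $\varepsilon$ and into the $(\,\cdot-\varepsilon)_+$ Cuntz estimate above. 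This support bookkeeping, rather than any conceptual difficulty, is where the real work of the proof concentrates.
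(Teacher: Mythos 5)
Your proposal follows essentially the same route as the paper's own proof: decompose $a$ into orthogonal pieces $b_1\perp b_2$, run the $\mathrm{AT}\Omega$ data for $A$ against $b_1$, feed $\alpha_0(\mathcal F)$ and $\alpha_0(b_2)$ into $\mathrm{Tdim_{nuc}}(B)\le n$, compose the maps, repair the almost-order-zero blocks with Theorem \ref{thm:2.19}, and close the Cuntz estimate through the $(\,\cdot-\varepsilon)_+$ reformulation of Theorem \ref{thm:2.8}. The one point you flag as remaining work --- that $\gamma_n$ lands only approximately in $A\cap\tilde\beta_n(F)^\perp$ because $\beta_n^0$ is merely asymptotically multiplicative --- is in fact glossed over in the paper's proof as well (it simply asserts $\gamma:A\to A\cap\beta^\perp(F)$), so your explicit cut-down repair is, if anything, more careful than the published argument.
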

	\begin{proof} By Theorem \ref{thm:2.8}, we must show that for any finite subset $G\subseteq A$, for any $\varepsilon>0$, and for any nonzero positive element $a\in A_{+}$, there exist a finite dimensional $\rm C^{*}$-algebra $F$,  a completely positive contractive  map $\alpha: A\rightarrow F$,
	    a nonzero piecewise contractive $n$-decomposable completely positive  map $\beta:F\rightarrow A$, and a completely positive contractive map
	    $\gamma:A\rightarrow A\cap\beta^{\perp}(F)$ such that
	
	    $(1)$ $\|x-\gamma(x)-\beta\alpha(x)\|<\varepsilon$, for all $x\in G$, and
	
	    $(2)$ $(\gamma(1_{A})-\varepsilon)_+\precsim a$.
	
	    Since $A$ is an infinite dimensional simple unital  $\rm C^{*}$-algebra, there exist positive elements $a_{1},~a_{2}\in A_{+}$ of norm one such that $a_{1}a_{2}=0$,
	    $a_{1}\sim a_{2}$ and $a_{1}+a_{2}\precsim a$.
	
	    Give $\varepsilon'>0$,  with $G'=G\cup\{a_{1},~a_{2},~1_{A}\}\subseteq A$, since $A$   is asymptotical tracially in $\Omega$ (i.e., $A\in \rm AT\Omega$),
	    there exist a $\rm C^{*}$-algebra $B$ in ${\Omega}$, completely positive contractive maps $\overline{\alpha}:A\rightarrow B$,
	    $\overline{\beta_{n}}:B\rightarrow A$, and $\overline{\gamma_{n}}:A\rightarrow A\cap\beta_{n}^{\perp}(B)(n\in \mathbb{N})$ such that
	
	    $(1')$ $\|x-\overline{\gamma_{n}}(x)-\overline{\beta_{n}}\overline{\alpha}(x)\|<\varepsilon'$, for all $x\in G'$, and for all $n\in \mathbb{N}$,
	
	    $(2')$ $\overline{\alpha}$ is a $(G',~\varepsilon')$-approximate embedding,
	
	    $(3')$ $\mathop{\lim}\limits_{n\rightarrow\infty}\|\overline{\beta_{n}}(xy)-\overline{\beta_{n}}(x)\overline{\beta_{n}}(y)\|=0$ and
	    $\mathop{\lim}\limits_{n\rightarrow\infty}\|\overline{\beta_{n}}(x)\|=\|x\|$ for all $x,~y\in B$, and
	
	    $(4')$ $\overline{\gamma_{n}}(1_{A})\precsim a_{1}\sim a_{2}$, for all $n\in \mathbb{N}$.
	
	    Since $a_{2}\in A_{+}$ and $\overline{\alpha}$ is a $(G',\varepsilon')$-approximate embedding, then
	    $\overline{\alpha}(a_{2})\in B_{+}$ and $\overline{\alpha}(a_{2})\neq 0$. We may assume that $(\overline{\alpha}(a_{2})-\varepsilon')_+\neq 0$.
	
	    Since $B\in{\Omega}$,  ${\rm Tdim_{nuc}}(B)\leq n$, with  $\{\overline{\alpha}(x), x\in G'\}\subseteq B$, with $\varepsilon'$, and nonzero positive element  $(\overline{\alpha}(a_{2})-\varepsilon')_+\in B_{+}$, there exist a
	    finite dimensional $\rm C^{*}$-algebra $F$,  a completely positive contractive map $\alpha':B\rightarrow F$, a nonzero piecewise contractive $n$-decomposable completely positive  map $\beta':F\rightarrow B$,  and a completely positive contractive map
	    $\gamma':B\rightarrow B\cap\beta^{\perp}(F)$ such that
	
	    $(1'')$ $\|\overline{\alpha}(x)-\gamma'\overline{\alpha}(x)-\beta'\alpha'\overline{\alpha}(x)\|<\varepsilon'$, for all $\{\overline{\alpha}(x), x\in G'\}$, and
	
	    $(2'')$ $(\gamma'(1_{B})-\varepsilon)_+\precsim (\overline{\alpha}(a_{2})-\varepsilon')_+$.
	
	    Define $\alpha:A\rightarrow F$, by $\alpha(x)=\alpha'\overline{\alpha}(x)$, $\varphi:F\rightarrow A$, by $\varphi(x)=\overline{\beta_{n}}\beta'(x)$.
	
	    For $x,~y\in F_{i}$ with $xy=0$, then $\varphi(x)\varphi(y)=\overline{\beta_{n}}\beta'(x)\overline{\beta_{n}}\beta'(y)$.
	    By $(3')$,  for $\delta>0$, there  exists $N>0$, such that  $n>N$, we have $\|\overline{\beta_{n}}\beta'(x)\overline{\beta_{n}}\beta'(y)-\overline{\beta_{n}}(\beta'(x)\beta'(y))\|<\delta$. Since $\beta'|_{F_{i}}$ is completely positive contractive order zero map, then $\varphi|_{F_{i}}$ is $\delta$-almost order zero map. By Theorem \ref{thm:2.19},
	    there exists $\beta:F\rightarrow A$ satisfying $\beta|_{F_{i}}$ is a completely positive contractive order zero map such that
	    $\|\beta-\varphi\|\leq\varepsilon$, then $\beta$ is a nonzero piecewise contractive $n$-decomposable completely positive  map.
	
	    Define $\gamma:A\rightarrow A\cap\beta^{\perp}(F)$, by
	    $\gamma(x)=\overline{\gamma_{n}}(x)+\overline{\beta_{n}}\gamma'\overline{\alpha}(x)$.
	Since $\overline{\gamma_{n}}(A)\perp\overline{\beta_{n}}(B)$, we have $\gamma$ is a completely positive contractive.

	    Then for all $x\in G$, we have
	
	    $\|x-\gamma(x)-\beta\alpha(x)\|=\|x-\overline{\gamma_{n}}(x)-\overline{\beta_{n}}\gamma'\overline{\alpha}(x)-\beta\alpha(x)\|$
	
	    $\leq\|x-\overline{\gamma_{n}}(x)-\overline{\beta_{n}}\gamma'\overline{\alpha}(x)-\varphi\alpha(x)\|+
	    \|\varphi\alpha(x)-\beta\alpha(x)\|$
	
	    $=\|x-\overline{\gamma_{n}}(x)-\overline{\beta_{n}}\gamma'\overline{\alpha}(x)-
	    \overline{\beta_{n}}\beta'\alpha'\overline{\alpha}(x)\|+\|\varphi\alpha(x)-\beta\alpha(x)\|$
	
	    $\leq\|x-\overline{\gamma_{n}}(x)-\overline{\beta_{n}}\overline{\alpha}(x)\|+
	    \|\overline{\beta_{n}}\overline{\alpha}(x)-\overline{\beta_{n}}\gamma'\overline{\alpha}(x)-
	    \overline{\beta_{n}}\beta'\alpha'\overline{\alpha}(x)\|$

 $+ \|\varphi\alpha(x)-\beta\alpha(x)\|$
	
	    $\leq\varepsilon'+\varepsilon'+\varepsilon'$
	
	    $\leq\varepsilon$.

Since $(\gamma'(1_{B})-\varepsilon)_+\precsim (\overline{\alpha}(a_{2})-\varepsilon')_+$,  for sufficiently small $\delta'>0$, there exist $v\in B$ such that

$$\|v(\overline{\alpha}(a_{2})-\varepsilon')_+v^*-(\gamma'(1_{B})-\varepsilon)_+\|<\delta',$$
 then, we have
 $$\|\overline{\beta_n}(v(\overline{\alpha}(a_{2})-\varepsilon')_+v^*)-\overline{\beta_n}((\gamma'(1_{B})-\varepsilon)_+)\|<\delta'.$$
By $(3')$, with sufficiently larger $n$, we have
 $$\|\overline{\beta_n}(v)\overline{\beta_n}((\overline{\alpha}(a_{2})-\varepsilon')_+)\overline{\beta_n}
(v^*)-\beta_n((\gamma'(1_{B})-\varepsilon)_+)\|<\delta'.$$

Therefore, we have
$$\overline{\beta_n}((\gamma'(1_{B})-2\varepsilon)_+)\precsim \overline{\beta_n}((\gamma'(1_{B})-3\delta'-\varepsilon)_+)\precsim  \overline{\beta_n}((\overline{\alpha}(a_{2})-\varepsilon')_+.$$

By $(1')$, $\|a_2-\overline{\gamma_n}(a_2)-\overline{\beta_n}\overline{\alpha}(a_2)\|<\varepsilon'$, then
we have $$(\overline{\beta_n}\overline{\alpha}(a_2)-\varepsilon')+\overline{\gamma_n}(a_2)\precsim a_2,$$ so we have $$(\overline{\beta_n}\overline{\alpha}(a_2)-\varepsilon')+\precsim a_2.$$

   We also have

$(\gamma(1_{A})-3\varepsilon)_+\precsim \overline{\gamma_{n}}(1_{A})+(\overline{\beta_{n}}\gamma'\overline{\alpha}(1_{A})-2\varepsilon)_+$

$\precsim
	    \overline{\gamma_{n}}(1_{A})+(\overline{\beta_{n}}\gamma'(1_{B})-2\varepsilon)_+$

$\precsim a_{1}\oplus\overline{\beta_n}(\overline{\alpha}(a_{2})-\varepsilon)_+$

$\precsim a_{1}+a_{2}\precsim a$.
	
	    Since $\alpha=\alpha'\overline{\alpha}$, $\alpha'$  and $\overline{\alpha}$ are completely positive contractive maps, then $\alpha$ is
	    a completely positive contractive map.
	
	    Therefore,  ${\rm Tdim_{nuc}}(A)\leq n$.	

\end{proof}

 \begin{theorem}\label{thm:3.3}{\rm (\cite{Q9}.)} Let $\Omega$ be a class of  unital
${\rm C^*}$-algebras such that $\Omega$ is closed under passing to  unital hereditary ${\rm C^*}$-subalgebra and tensoring matrix algebras. Let $A\in$ $\rm {TA}\Omega$  be  an infinite-dimensional  simple  unital ${\rm C^*}$-algebra.
 Suppose that $\alpha:G\to {\rm{Aut}}(A)$ is an action of a finite group $G$ on $A$
 which has  tracial Rokhlin property. Then the crossed product ${\rm C^*}$-algebra
 ${\rm C^*}( G, A,\alpha)$ belongs to $\rm{TA}\Omega$.
 \end{theorem}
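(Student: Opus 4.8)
The plan is to verify the defining property of $\mathrm{TA}\Omega$ (Definition~\ref{def:2.2}, in the strengthened form of Theorem~\ref{thm:2.5}) directly for the crossed product $B := \mathrm{C}^*(G,A,\alpha)$. Write $\{u_g\}_{g\in G}$ for the canonical unitaries implementing the action, so $u_g a u_g^* = \alpha_g(a)$ and $B$ is densely spanned by the elements $au_g$ with $a\in A$, $g\in G$. Since any finite subset of $B$ is approximated within $\varepsilon$ by finite linear combinations of such elements, I would first reduce to the case where the given finite set $\mathcal{F}$ consists of elements $au_g$ with $a$ ranging over a finite set $F_0\subseteq A$ of norm-one elements and $g$ ranging over $G$. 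The algebra $B$ is simple (the tracial Rokhlin property forces this), so a preliminary reduction, using simplicity and the canonical conditional expectation $E\colon B\to A$, replaces the target $c$ by a nonzero positive element of $A$ that is Cuntz-dominated by $c$ in $B$; this is what allows both the Rokhlin property and the $\mathrm{TA}\Omega$ property of $A$, whose inputs live in $A$, to be applied. I then split this element into two orthogonal positive pieces $c_1,c_2\in A_+$ earmarked for the two sources of error below.

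Next I would invoke the tracial Rokhlin property (Definition~\ref{def:2.10}) to produce mutually orthogonal projections $\{e_g\}_{g\in G}\subseteq A$ which, for a sufficiently fine tolerance and a large enough finite set (containing $F_0$ and the auxiliary compressions needed later), satisfy $\|\alpha_g(e_h)-e_{gh}\|$ small, approximately commute with $F_0$, have $1-e\precsim c_1$ where $e=\sum_{g}e_g$, and obey the fullness estimate $\|ece\|\ge\|c\|-\varepsilon$. Setting $w_g := u_g e_1$ (with $1$ the unit of $G$) gives approximate partial isometries, since $w_g^*w_g = e_1$ and $w_gw_g^*=\alpha_g(e_1)\approx e_g$; hence $\{w_g\,x\,w_h^* : g,h\in G,\ x\in e_1Ae_1\}$ generates, up to small error, a copy of $M_{|G|}\otimes(e_1Ae_1)$ inside the corner $eBe$. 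In effect the Rokhlin tower untwists the crossed-product structure on the corner.

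The third step is to apply $A\in\mathrm{TA}\Omega$ inside the hereditary subalgebra $e_1Ae_1$, which is again a simple $\mathrm{TA}\Omega$ algebra under the stated closure hypotheses on $\Omega$. Fed the compressions into $e_1Ae_1$ of the relevant data (together with a target built from $c_2$), this yields a projection $p\le e_1$ and a subalgebra $D\subseteq e_1Ae_1$ with $1_D=p$ and $D\in\Omega$, with $p$ approximately commuting with the data and $e_1-p$ controlled. Because $\Omega$ is closed under tensoring with matrix algebras, $C:=M_{|G|}\otimes D\in\Omega$; transported through the matrix-unit system, and after a standard perturbation of the $w_g$ to exact partial isometries and of a near-projection to an exact one, $C$ embeds in $B$ as a subalgebra with unit a projection $q$ close to $\sum_{g}w_g p w_g^*$. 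One then checks the conditions of Theorem~\ref{thm:2.5}: approximate commutation of $q$ with $\mathcal{F}$ follows from that of $e$ and of $p$ together with the Rokhlin relations $u_g w_k=w_{gk}$; the containment $qxq\in_\varepsilon C$ follows from the explicit form of the matrix units, provided the TA finite set in $e_1Ae_1$ included the compressions $e_1 u_h^* a\,u_{gk} e_1$; and the lower norm bound comes from the fullness estimate.

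The main obstacle I expect is condition (3), the Cuntz comparison $1-q\precsim c$. Up to the perturbations, $1-q=(1-e)+(e-q)$, where $e-q\approx\sum_g w_g(e_1-p)w_g^* = \sum_g \alpha_g(e_1-p)$, an orthogonal sum Cuntz-equivalent to $|G|$ copies of $e_1-p$. The first summand is dominated by $c_1$ by the third clause of the Rokhlin property; the delicate point is the second, which requires choosing, at the outset, the orthogonal decomposition of the $A$-valued target and the auxiliary element fed into the $\mathrm{TA}\Omega$ step so that $|G|$ orthogonal conjugates of $e_1-p$ are dominated by $c_2$, and then transferring this subequivalence from $A$ into $B$. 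This bookkeeping forces one to fix the Rokhlin tolerance, the split $c=c_1+c_2$, and the TA approximating element in the correct order, and to invoke Theorem~\ref{thm:2.1} repeatedly to pass between $\varepsilon$-cut-downs while preserving all prior estimates; upgrading the approximate partial isometries and near-projections to exact ones is routine but must be carried out with enough slack to leave every preceding inequality intact.
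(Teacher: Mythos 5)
The paper offers no proof of this statement; it is imported verbatim from \cite{Q9}, and your outline is essentially the argument given there (itself the Phillips-style Rokhlin-tower argument from \cite{P2} adapted to a general class $\Omega$): reduce to elements $au_g$, pull the comparison element into $A$ through the canonical conditional expectation and split it, build the approximate matrix-unit system $w_g=u_ge_1$ over the corner $e_1Ae_1$, apply ${\rm TA}\Omega$ inside the hereditary subalgebra $e_1Ae_1$ with a target chosen so that $|G|$ orthogonal copies of $e_1-p$ are dominated by the second piece, and transport $M_{|G|}\otimes D\in\Omega$ into the crossed product. Your plan is correct and takes the same route as the cited source.
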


The following Corollary also  can be obtained by a result of Fu and Lin of \cite{FL} and a result of Elliott, Fan and Fang of \cite{EFF} and Theorem \ref{thm:3.3}).

\begin{corollary}  \label{cor:3.4}Let $A$ be an infinite dimensional  simple separable amenable unital
 ${\rm C^*}$-algebra with  ${\rm Tdim_{nuc}}(A)\leq n$.
Suppose that  $\alpha:G\to {\rm Aut}(A)$ is  an action of a finite group $G$ on $A$
 which has the  tracial Rokhlin property.  Then ${\rm Tdim_{nuc}}({{\rm C^*}(G, A,\alpha)})\leq n$.
\end{corollary}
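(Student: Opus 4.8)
The plan is to take $\Omega$ to be the class of all unital ${\rm C^*}$-algebras $B$ with ${\rm Tdim_{nuc}}(B)\le n$, and then to chain Theorems \ref{thm:3.3}, \ref{thm:3.1} and \ref{thm:3.2}. By the remark following Definition \ref{def:2.7}, this $\Omega$ is closed under tensoring with matrix algebras and under passing to unital hereditary ${\rm C^*}$-subalgebras, so it satisfies the standing hypothesis of Theorem \ref{thm:3.3}. Since ${\rm Tdim_{nuc}}(A)\le n$ we have $A\in\Omega$, whence $A\in{\rm TA}\Omega$ trivially (take $p=1_A$ and $B=A$ in Definition \ref{def:2.2}).

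First I would apply Theorem \ref{thm:3.3} to $\alpha$: as $A\in{\rm TA}\Omega$ is an infinite-dimensional simple unital ${\rm C^*}$-algebra and $\alpha$ has the tracial Rokhlin property, the crossed product ${\rm C^*}(G,A,\alpha)$ lies in ${\rm TA}\Omega$. Next I would observe that every $B\in\Omega$ belongs to ${\rm AT}\Omega$, witnessed by $B$ itself with $\alpha=\beta_n={\rm id}_B$ and $\gamma_n=0$ in Definition \ref{def:2.11}; hence $\Omega\subseteq{\rm AT}\Omega$ and therefore ${\rm TA}\Omega\subseteq{\rm TA}({\rm AT}\Omega)$. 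This gives ${\rm C^*}(G,A,\alpha)\in{\rm TA}({\rm AT}\Omega)$.

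Before the last two steps I would verify that ${\rm C^*}(G,A,\alpha)$ meets the hypotheses of Theorems \ref{thm:3.1} and \ref{thm:3.2}: it is unital and separable since $A$ is and $G$ is finite; it is amenable since $G$ is finite and $A$ is amenable; it is infinite-dimensional because it contains a copy of $A$; and it is simple because $A$ is simple and $\alpha$ has the tracial Rokhlin property. Granting this, Theorem \ref{thm:3.1} applies and yields ${\rm C^*}(G,A,\alpha)\in{\rm AT}\Omega$, and then Theorem \ref{thm:3.2}, applied with the same class $\Omega$ (each member of which has tracial nuclear dimension at most $n$), gives ${\rm Tdim_{nuc}}({\rm C^*}(G,A,\alpha))\le n$, as desired.

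The verifications of the closure of $\Omega$, of the inclusion $\Omega\subseteq{\rm AT}\Omega$, and of the unital, separable and amenable structure of the crossed product are routine. The step on which everything hinges is the simplicity, together with the infinite-dimensionality, of ${\rm C^*}(G,A,\alpha)$, needed so that both Theorem \ref{thm:3.1} and Theorem \ref{thm:3.2} become available; this is precisely where the tracial Rokhlin property does the essential work, and it is the point I would justify or cite most carefully.
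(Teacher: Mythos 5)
Your proposal is correct and follows exactly the route the paper takes: its entire proof of Corollary \ref{cor:3.4} is the citation ``By Theorem \ref{thm:3.1}, Theorem \ref{thm:3.2} and Theorem \ref{thm:3.3}'', and your argument simply fills in the intermediate bookkeeping (choosing $\Omega$ to be the unital algebras with ${\rm Tdim_{nuc}}\le n$, noting $\Omega\subseteq{\rm AT}\Omega$ hence ${\rm TA}\Omega\subseteq{\rm TA}({\rm AT}\Omega)$, and checking that the crossed product is unital, separable, amenable, infinite-dimensional and simple). Your flagging of the simplicity of ${\rm C^*}(G,A,\alpha)$ as the point needing a citation is apt, but the chain of theorems is the same as the paper's.
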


\begin{proof}
By Theorem \ref{thm:3.1}, Theorem \ref{thm:3.2} and Theorem \ref{thm:3.3}.

\end{proof}
	
    \begin{theorem} \label{thm:3.5}
    	Let $\Omega$ be a class of unital $\rm C^{*}$-algebras which have  the second type  tracial nuclear dimension at most $n$. Let $A$ be an infinite
    dimensional  simple unital   $\rm C^{*}$-algebra  and $A$   is asymptotical tracially in $\Omega$ (i.e. $A\in \rm AT\Omega$). Then  ${\rm T^2dim_{nuc}}(A)\leq n$.
    \end{theorem}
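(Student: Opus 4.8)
The plan is to imitate the proof of Theorem \ref{thm:3.2} verbatim in its skeleton, replacing the characterization of $\mathrm{Tdim_{nuc}}$ used there by the defining property of $\mathrm{T^2dim_{nuc}}$ from Definition \ref{def:2.9}. The essential simplification is that I no longer have to assemble a single completely positive contractive map $\gamma:A\to A\cap\beta^{\perp}(F)$; instead, for each element $x$ of the given finite positive subset $\mathcal{F}$ (which I may assume consists of norm-one elements), it suffices to produce one positive element $x'\precsim a$ with $\|x-x'-\varphi\psi(x)\|<\varepsilon$. So I would first fix $\varepsilon>0$ and a nonzero $a\in A_+$, and aim to build a finite-dimensional $F=F_0\oplus\cdots\oplus F_n$, a completely positive contractive $\psi:A\to F$, a completely positive $\varphi:F\to A$ with each $\varphi|_{F_i}$ order zero, and the elements $x'$.

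As in Theorem \ref{thm:3.2}, since $A$ is infinite-dimensional simple unital I choose orthogonal norm-one positives $a_1,a_2$ with $a_1\sim a_2$ and $a_1+a_2\precsim a$. Applying $A\in\mathrm{AT}\Omega$ through Theorem \ref{thm:2.12} to $G'=\mathcal{F}\cup\{a_1,a_2,1_A\}$ and a small $\varepsilon'>0$ gives $B\in\Omega$, a unital approximate embedding $\overline{\alpha}:A\to B$, and maps $\overline{\beta_n}:B\to A$, $\overline{\gamma_n}:A\to A\cap\overline{\beta_n}^{\perp}(B)$ with $\overline{\beta_n}(1_B),\overline{\gamma_n}(1_A)$ complementary projections, $\|x-\overline{\gamma_n}(x)-\overline{\beta_n}\overline{\alpha}(x)\|<\varepsilon'$, with $\overline{\beta_n}$ asymptotically multiplicative and isometric, and $\overline{\gamma_n}(1_A)\precsim a_1$. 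Since $B\in\Omega$ has $\mathrm{T^2dim_{nuc}}(B)\leq n$, I then apply Definition \ref{def:2.9} inside $B$ to the finite positive set $\{\overline{\alpha}(x):x\in G'\}$, to $\varepsilon'$, and to the nonzero positive element $(\overline{\alpha}(a_2)-\varepsilon')_+$, obtaining $F=F_0\oplus\cdots\oplus F_n$, completely positive maps $\psi':B\to F$, $\varphi':F\to B$ with each $\varphi'|_{F_i}$ order zero, and for each $y=\overline{\alpha}(x)$ a positive $y'\precsim(\overline{\alpha}(a_2)-\varepsilon')_+$ with $\|y-y'-\varphi'\psi'(y)\|<\varepsilon'$.

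Next I set $\psi=\psi'\overline{\alpha}$, which is completely positive contractive, and consider the provisional composite $\overline{\beta_n}\varphi'$. Because $\overline{\beta_n}$ is asymptotically multiplicative and each $\varphi'|_{F_i}$ is order zero, $\overline{\beta_n}\varphi'|_{F_i}$ is $\delta$-almost order zero for large $n$; Theorem \ref{thm:2.19} then yields genuine completely positive contractive order-zero maps which I assemble into $\varphi:F\to A$ with $\|\varphi-\overline{\beta_n}\varphi'\|$ as small as I wish, so that condition $(3)$ of Definition \ref{def:2.9} holds. For each $x\in\mathcal{F}$ I set $x'=\overline{\gamma_n}(x)+(\overline{\beta_n}((\overline{\alpha}(x))')-\eta)_+$ for a small $\eta>0$. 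The orthogonality $\overline{\gamma_n}(A)\perp\overline{\beta_n}(B)$ places $\overline{\gamma_n}(x)$ and $(\overline{\beta_n}((\overline{\alpha}(x))')-\eta)_+$ in the complementary corners cut out by $\overline{\gamma_n}(1_A)$ and $\overline{\beta_n}(1_B)$, so $x'$ is a positive orthogonal sum; chaining the estimates $x\approx\overline{\gamma_n}(x)+\overline{\beta_n}\overline{\alpha}(x)$ and $\overline{\alpha}(x)\approx(\overline{\alpha}(x))'+\varphi'\psi'\overline{\alpha}(x)$ with $\|\varphi-\overline{\beta_n}\varphi'\|$ and $\eta$ small yields $\|x-x'-\varphi\psi(x)\|<\varepsilon$.

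The remaining, and hardest, point is the exact subequivalence $x'\precsim a$. The first summand is immediate: $\overline{\gamma_n}(x)\leq\overline{\gamma_n}(1_A)\precsim a_1$. For the second summand the difficulty is that $\overline{\beta_n}$ only asymptotically preserves multiplication, hence preserves Cuntz comparison only approximately, so from $(\overline{\alpha}(x))'\precsim(\overline{\alpha}(a_2)-\varepsilon')_+$ I can only deduce, for large $n$ and after a cut-down, $(\overline{\beta_n}((\overline{\alpha}(x))')-\eta)_+\precsim\overline{\beta_n}((\overline{\alpha}(a_2)-\varepsilon')_+)\approx(\overline{\beta_n}\overline{\alpha}(a_2)-\varepsilon')_+\precsim a_2$, where the last step uses orthogonality in $\|a_2-\overline{\gamma_n}(a_2)-\overline{\beta_n}\overline{\alpha}(a_2)\|<\varepsilon'$ together with Theorem \ref{thm:2.1}$(1)$. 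Arranging the cut-down levels so that a single parameter $\eta$ simultaneously produces an \emph{exact} $\precsim a$ and keeps the approximation within $\varepsilon$ is the main bookkeeping obstacle; it is precisely here that the second-type definition helps, since it only demands $\|x-x'-\varphi\psi(x)\|<\varepsilon$, which lets the cut-down error $\eta$ be absorbed. Combining the two summands through orthogonality then gives $x'\precsim a_1\oplus a_2=a_1+a_2\precsim a$, establishing $\mathrm{T^2dim_{nuc}}(A)\leq n$.
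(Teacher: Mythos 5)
Your proposal is correct and follows essentially the same route as the paper's proof: choose orthogonal norm-one positives $a_1\sim a_2$ with $a_1+a_2\precsim a$, pass through the $\mathrm{AT}\Omega$ structure to a $B\in\Omega$, apply $\mathrm{T^2dim_{nuc}}(B)\le n$ to the image of the finite set, correct $\overline{\beta_n}\varphi'$ to a genuine order-zero map via Theorem \ref{thm:2.19}, set $x'=\overline{\gamma_n}(x)+(\overline{\beta_n}((\overline{\alpha}(x))')-\eta)_+$, and transfer the Cuntz subequivalence through the asymptotically multiplicative $\overline{\beta_n}$ together with $(\overline{\beta_n}\overline{\alpha}(a_2)-\varepsilon')_+\precsim a_2$. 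The only cosmetic deviations are your invocation of Theorem \ref{thm:2.12} (which strictly requires first enlarging $\Omega$ to its closure under matrix tensoring and hereditary subalgebras, as permitted by the remark after Definition \ref{def:2.9}) where the paper uses Definition \ref{def:2.11} directly, and your choice of the test element $(\overline{\alpha}(a_2)-\varepsilon')_+$ in place of $\alpha(a_2)$.
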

    \begin{proof}
    	We must show that for any positive subset $G\subseteq A_+$ (we may assume that $G\subseteq A_+^1$),  for any $\varepsilon>0$, and for any nonzero  positive element $a\in  A_{+}$, there exist a finite dimension $\rm C^{*}$-algebra $F=F_{0}\oplus\cdots\oplus F_{n}$ and completely positive  maps $\psi:A\rightarrow F$ and $\varphi:F\rightarrow A$ such that
    	
    	$(1)$ for all $x\in G$, exists $x'\in A_{+}$, such that $x'\precsim a$, and $\|x-x'-\varphi\psi(x)\|<\varepsilon$,
    	
    	$(2)$ $\|\psi\|\leq1$, and
    	
    	$(3)$ $\varphi|_{F_{i}}$ is  a completely positive contractive  order zero map, for $i=1,~\cdots,~n$.
    	
     Since $A$ is an infinite dimensional simple unital  $\rm C^{*}$-algebra, there exist positive elements $a_{1},~a_{2}\in A_{+}$ of norm one such that $a_{1}a_{2}=0$,
    	$a_{1}\sim a_{2}$ and $a_{1}+a_{2}\precsim a$.
    	
    	Give $\varepsilon'>0$, with $G'=G\cup\{a_{1},~a_{2}\}\subseteq A$, since $A$   is asymptotical tracially in $\Omega$ (i.e. $A\in \rm AT\Omega$),
    	there exist a $\rm C^{*}$-algebra $B$ in $\Omega$, and  completely positive contractive maps $\alpha:A\rightarrow B$,
    	$\beta_{n}:B\rightarrow A$, and $\gamma_{n}:A\rightarrow A\cap\beta_{n}^{\perp}(B)(n\in \mathbb{N})$ such that
    	
    	$(1')$ $\|x-\gamma_{n}(x)-\beta_{n}\alpha(x)\|<\varepsilon'$, for all $x\in G'$, and for all $n\in{\mathbb{N}}$,
    	
    	$(2')$ $\alpha$ is a $(G',\varepsilon')$-approximate embedding,
    	
    	$(3')$ $\mathop{\lim}\limits_{n\rightarrow\infty}\|\beta_{n}(xy)-\beta_{n}(x)\beta_{n}(y)\|=0$ and
    	$\mathop{\lim}\limits_{n\rightarrow\infty}\|\beta_{n}(x)\|=\|x\|$ for all $x,~y\in B$, and
    	
    	$(4')$ $\gamma_{n}(1_{A})\precsim a_{1}\sim a_{2}$, for all $n\in {\mathbb{N}}$.
    	
    	Since $\alpha$ is a $(G',\varepsilon')$-approximate embedding,  then
    	 $\alpha(a_{2})$ is nonzero. We may assume that $(\alpha(a_{2})-\varepsilon')_+\neq 0$.
    	
    	Since $B\in \Omega$, then ${\rm T^2dim_{nuc}}(B)\leq n$, with  $\{\alpha(x), x\in G'\}\subseteq B$, with $\varepsilon'$,
    	and nonzero positive element  $\alpha(a_{2})\in B_{+}$, there exist a finite dimension $\rm C^{*}$-algebra $F=F_{0}\oplus\cdots\oplus F_{n}$ and completely positive  maps $\psi':B\rightarrow F$, and $\varphi':F\rightarrow B$ such that
    	
    	$(1'')$ for all $x\in G'$, there exists $\alpha(x)'\in B_{+}$, such that $\alpha(x)'\precsim\alpha(a_{2})$ and
        and $\|\alpha(x)-\alpha(x)'-\varphi'\psi'\alpha(x)\|<\varepsilon'$,
    	
    	$(2'')$ $\|\psi'\|\leq 1$, and
    	
    	$(3'')$ $\varphi'|_{F_{i}}$ is a completely positive contractive  order zero map,  for $i=1,~\cdots,~n$.
    	
    	Define $\psi:A\rightarrow F$, by $\psi(a)=\psi'\alpha(a)$,  $\overline{\varphi}:F\rightarrow A$, by $\overline{\varphi}(a)=\beta_{n}\varphi'(a)$ and
    	$x'=\gamma_{n}(x)+(\beta_{n}\alpha(x)'-2\varepsilon)_{+}$, since $\gamma_{n}(x),\beta_{n}\alpha(x)'\in A_{+}$,
    	then $x'\in A_{+}$.
    	
    	For $x,~y\in F_{i}$ with $xy=0$, then $\overline{\varphi}(x)\overline{\varphi}(y)=\beta_{n}\varphi'(x)\beta_{n}\varphi'(y).$
    	By $(3')$,  for $\delta>0$, there exists sufficiently large  $N>0$, such that $n>N$, one has  $\|\beta_{n}\varphi'(x)\beta_{n}\varphi'(y)-\beta_{n}(\varphi'(x)\varphi'(y))\|<\delta$. Since $\varphi'|_{F_{i}}$ is completely positive contractive order zero map, then $\overline{\varphi}|_{F_{i}}$ is $\delta$-almost order zero map. By Theorem \ref{thm:2.19},
    	there exists $\varphi:F\rightarrow A$ satisfying $\varphi|_{F_{i}}$ is a completely positive contractive order zero map and
    	$\|\varphi-\overline{\varphi}\|\leq\varepsilon$.
    	
    	Then for all $x\in G$, we have
    	
    	$\|x-x'-\varphi\psi(x)\|=\|x-\gamma_{n}(x)-(\beta_{n}\alpha(x)'-2\varepsilon)_{+}-\varphi\psi(x)$
    	
    	$\leq2\varepsilon+\|x-\gamma_{n}(x)-\beta_{n}\alpha(x)'-\overline{\varphi}\psi(x)\|+
    	\|\overline{\varphi}\psi(x)-\varphi\psi(x)\|$
    	
    	$\leq3\varepsilon+\|x-\gamma_{n}(x)-\beta_{n}\varphi'\psi'\alpha(x)-\beta_{n}\alpha(x)'\|$
    	
    	$\leq3\varepsilon+\|x-\gamma_{n}(x)-\beta_{n}\alpha(x)\|+
    	\|\beta_{n}\alpha(x)-\beta_{n}\varphi'\psi'\alpha(x)-\beta_{n}\alpha(x)'\|$
    	
    	$\leq3\varepsilon+\varepsilon'+	\|\alpha(x)-\varphi'\psi'\alpha(x)-\alpha(x)'\|$
    	
    	$\leq3\varepsilon+\varepsilon'+\varepsilon'$
    	
    	$\leq5\varepsilon$.
    	
    Since $\alpha(x)'\precsim \alpha(a_2)$, then  there exists $v\in A$ such that
    $$\|v\alpha(a_2)v^*-\alpha(x)'\|<\varepsilon'.$$  By $(3')$, we have $$\|\beta_n(v\alpha(a_2)v^*)-\beta_n\alpha(x)'\|<\varepsilon',$$  by $(3')$,
     $$\|\beta_n(v)\beta_n\alpha(a_2)\beta_n(v^*)-\beta_n\alpha(x)'\|<2\varepsilon'.$$ Therefore, we have
     $$(\beta_{n}\alpha(x)'-2\varepsilon)_{+}\precsim (\beta_{n}\alpha(a_{2})-\varepsilon')_{+}.$$
Since $\|a_2-\gamma_n(a_2)-\beta_n\alpha(a_2)\|<\varepsilon'$, we have $$(\beta_n\alpha(a_2)-\varepsilon')_++\gamma_n(a_2)\precsim a_2.$$
    Then, we have
    $$(\beta_n\alpha(a_2)-\varepsilon')_+\precsim a_2.$$

    Therefore, we  have

    $x'=\gamma_{n}(x)+(\beta_{n}\alpha(x)'-2\varepsilon)_{+}$

    $\precsim\gamma_{n}(1_{A})\oplus
    	(\beta_{n}\alpha(a_{2})-\varepsilon')_{+}\precsim\gamma_{n}(1_{A})\oplus a_{2}$

    $\precsim a_{1}+a_{2}\precsim a$.
    	
    	Since $\psi=\psi'\alpha$ and  $\|\psi'\|\leq 1$, then  $\alpha$ is a completely positive contractive map.
    	
    	Therefore, ${\rm T^2dim_{nuc}}(A)\leq n$.
    \end{proof}

    \begin{corollary} \label{cor:3.6} Let $A$ be an infinite-dimensional  simple separable amenable unital
 ${\rm C^*}$-algebra with ${\rm T^2dim_{nuc}}(A)\leq n$.
Suppose that  $\alpha:G\to {\rm Aut}(A)$ is  an action of a finite group $G$ on $A$
 which has the  tracial Rokhlin property.  Then ${\rm T^2dim_{nuc}}{{\rm C^*}(G, A,\alpha)}\leq n$.
\end{corollary}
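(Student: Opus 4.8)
The plan is to run the exact analogue of the argument for Corollary~\ref{cor:3.4}, replacing Theorem~\ref{thm:3.2} by its second-type counterpart Theorem~\ref{thm:3.5}. Let $\Omega$ denote the class of all unital ${\rm C^*}$-algebras $B$ with ${\rm T^2dim_{nuc}}(B)\leq n$. By the remark following Definition~\ref{def:2.9}, this class $\Omega$ is closed under tensoring with matrix algebras and under passing to unital hereditary ${\rm C^*}$-subalgebras, which are precisely the standing hypotheses of Theorems~\ref{thm:3.1} and~\ref{thm:3.3}. Since ${\rm T^2dim_{nuc}}(A)\leq n$ by assumption, we have $A\in\Omega$; taking $p=1_A$ and $B=A$ in Definition~\ref{def:2.2} shows trivially that $A\in{\rm TA}\Omega$.

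First I would feed this into Theorem~\ref{thm:3.3}: as $\alpha$ has the tracial Rokhlin property and acts on the infinite-dimensional simple unital algebra $A\in{\rm TA}\Omega$, we obtain ${\rm C^*}(G,A,\alpha)\in{\rm TA}\Omega$. Next, observe that every member of $\Omega$ lies in ${\rm AT}\Omega$ (take the maps $\alpha=\beta_n={\rm id}$ and $\gamma_n=0$ in Definition~\ref{def:2.11}), so $\Omega\subseteq{\rm AT}\Omega$ and therefore ${\rm TA}\Omega\subseteq{\rm TA}({\rm AT}\Omega)$. In particular ${\rm C^*}(G,A,\alpha)\in{\rm TA}({\rm AT}\Omega)$. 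Applying Theorem~\ref{thm:3.1} then yields ${\rm C^*}(G,A,\alpha)\in{\rm AT}\Omega$, and finally Theorem~\ref{thm:3.5} gives ${\rm T^2dim_{nuc}}({\rm C^*}(G,A,\alpha))\leq n$, as desired.

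The one place where care is needed is verifying that the output of Theorem~\ref{thm:3.3} satisfies the hypotheses required to invoke Theorems~\ref{thm:3.1} and~\ref{thm:3.5}, namely that ${\rm C^*}(G,A,\alpha)$ is unital, separable, infinite-dimensional, simple and amenable. Unitality is immediate since $G$ is finite and $A$ is unital; separability and infinite-dimensionality follow because ${\rm C^*}(G,A,\alpha)$ contains $A$ as a unital subalgebra; amenability follows from amenability of $A$ together with finiteness (hence amenability) of $G$. The only genuine input is simplicity of the crossed product, which I expect to be the main (though standard) obstacle: it follows from the tracial Rokhlin property of $\alpha$ on the simple algebra $A$, which forces the action to be pointwise sufficiently outer so that ${\rm C^*}(G,A,\alpha)$ is again simple. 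With these permanence facts in hand, the corollary reduces entirely to the chain of implications above.
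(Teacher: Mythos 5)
Your proposal is correct and follows essentially the same route as the paper, whose proof is just the citation chain Theorem~\ref{thm:3.3} (crossed product stays in ${\rm TA}\Omega$), Theorem~\ref{thm:3.1} (${\rm TA}({\rm AT}\Omega)\subseteq{\rm AT}\Omega$), and Theorem~\ref{thm:3.5} (${\rm AT}\Omega$ implies ${\rm T^2dim_{nuc}}\leq n$); you have merely filled in the routine verifications (closure properties of $\Omega$, $\Omega\subseteq{\rm AT}\Omega$, and simplicity/amenability of the crossed product) that the paper leaves implicit. Note that the paper's printed proof cites Theorem~\ref{thm:3.2} where Theorem~\ref{thm:3.3} is clearly intended, and your reading is the correct one.
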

\begin{proof}
By Theorem \ref{thm:3.1}, Theorem \ref{thm:3.2} and Theorem \ref{thm:3.5}.

\end{proof}


\begin{thebibliography}{13}
\bibitem{APTH} R. Antoine, F. Perera  and  H. Thiel, {\em Tensor products and regularity
properties of Cuntz semigroups}. Mem. Amer. Math. Soc., \textbf{251} (2018), no. 1199; 199 pages.

\bibitem{APRTH} R. Antoine, F. Perera, L. Robert and H. Thiel,{\em ${C^*}$-algebras of stable
rank one and their Cuntz semigroups}. Duke Math. J., \textbf{171} (2022), 33--99.

\bibitem{PPT}P. Ara, F. Perera, and A. Toms, {\em $K$-theory for operator algebras.
Classification of ${C^*}$-algebras}, Aspects of operator algebras and applications, 1--71,
Contemp. Math., 534, American  Mathematical  Society, Providence, RI, 2011.

\bibitem{AC}  A. Connes,  {\em Outer conjugacy class of automorphisms of
factors}, Ann. Sci. Ecole Norm. Sup., \textbf{8} (1975),  383--420.


\bibitem{E1}G. A. Elliott, {\em On the classification of  inductive
limits of sequences of semisimple finite dimensional algebras}, J.
Algebra, \textbf{38} (1976), 29--44.

\bibitem{EFF}G. A. Elliott, Q. Fan,  and X. Fang,  {\em Certain  properties  of  tracial approximation ${ C^*}$-algebras}, {C. R. Math. Rep. Acad. Sci. Canada},  {\textbf{40}}, (2018), 104--133.

\bibitem{E6}G. A. Elliott and G. Gong, {\em On the
classification of ${C^*}$-algebras of real rank zero, II},
 Ann. Math., {\bf 144} (1996), 497--610.
\bibitem{GL}  G. A. Elliott, G. Gong, and L. Li,  {\em  On the classification of simple inductive limit ${C^*}$-algebras II: The isomorphism theorem}, Invent. Math.,  {\textbf{168}} (2007), 249--320.

\bibitem{EGLN2}G. A. Elliott, G. Gong, H. Lin,  and Z. Niu, {\em Simple stably projectionless  ${C^*}$-algebras with generalized tracial rank one}, { J. Noncommut. Geom}. \textbf{14},  (2020), 251--347.


\bibitem{ET} G. A. Elliott and A. Toms, {\em Regularity properties in the classification program for separable amenable
${C^*}$-algebras},  Bull. Amer. Math. Soc.,  \textbf{45} (2008), 229--245.


 \bibitem{EZ5} G. A. Elliott, G. Gong, H. Lin, and Z. Niu, {\em On the classification of simple amenable ${C^*}$-algebras with finite decomposition rank, II}, arXiv: 1507.03437.

\bibitem{EZ} G. A. Elliott and Z. Niu, {\em On tracial
approximation},  J. Funct. Anal., \textbf{254} (2008), 396--440.



\bibitem{Q7} Q. Fan, {\em  Some ${C^*}$-algebra  properties   preserved by tracial approximation},  Israel J.  Math.,  \textbf{195} (2013), 545--563.
\bibitem{Q5} Q. Fan and X. Fang, {\em Non-simple tracial approximation}, Houston J. of  Math. \textbf{37}  (2011), 1249--1263.

\bibitem{Q10} Q. Fan and X. Fang, {\em Stable rank one and real rank zero for crossed products by finite group actions with the tracial Rokhlin property}, Chinese Ann. Math. Ser. B, \textbf{30} (2009), 179--186.


\bibitem{Q9} Q. Fan and X. Fang, {\em Crossed products by finite group actions with  a certain tracial Rokhlin property}, Acta Math. Scientia, \textbf{38} (2018), 829--842.

\bibitem{FU} X. Fu, {\em Tracial nuclear dimension of  ${C^*}$-algebras}, Ph.D.  thesis, East China Normal University, 2018.


\bibitem{FL} X. Fu and H. Lin, {\em Tracial approximation in simple $C^*$-algebras},
Canad. J. Math., First View, 26 February 2021, pp. 1--63.

\bibitem{G1} G. Gong, {\em On the classification of simple inductive limit ${C^*}$-algebras. I. The reduction theorem},  Doc. Math., \textbf{7} (2002), 255--461.

\bibitem{GL2}  G. Gong and  H. Lin, {\em  On classification of non-unital simple amenable ${C^*}$-algebras, II}, { Journal of Geometry and Physics}, \textbf{158} , Article 103865, 102 pages,  (2020).

 \bibitem{GL3} G. Gong and  H. Lin,  {\em On classification of non-unital simple amenable ${C^*}$-algebras, III}, arXiv: 2010. 01788.


\bibitem{GLN1} G. Gong, H. Lin, and Z. Niu, {\em Classification of finite simple amenable $\mathcal{Z}$-stable ${C^*}$-algebras, I: ${C^*}$-algebra with generalized tracial rank one}, C. R. Math. Rep. Acad. Sci. Canada,  {\textbf{42}} (2020), 63--450.


\bibitem{GLN2} G. Gong, H. Lin, and Z. Niu, {\em Classification of finite simple amenable $\mathcal{Z}$-stable ${C^*}$-algebras, II: ${C^*}$-algebras with rational generalized tracial rank one}, C. R. Math. Rep. Acad. Sci. Canada,  {\textbf{42}} (2020), 451--539.


\bibitem{LO} H. H. Lee and H. Osaka  {\em Tracially sequentially-split *-homomorphisms between ${C^*}$-algebras}, arXiv:1707. 07377.

\bibitem{HHO1}  R. Herman  and A. Ocneanu,  {\em Stability for integer
actions on UHF ${C^*}$-algebras}, J. Funct. Anal., \textbf{59} (1984), 132--144.

\bibitem{HO} I. Hirshberg and J. Orovitz, {\em Tracially $\mathcal{Z}$-absorbing ${C^*}$-algebras},  J. Funct. Anal., \textbf{265} (2013), 765--785.


\bibitem{HLX} S. Hu, L. Hin, and Y. Xue, {\em The tracial topological rank of $C^*$-algebras (II)}, Indiana Univ. Math. J. \textbf{53} (2004), 1579-1606.

\bibitem{HLX2} S. Hu, L. Hin, and Y. Xue, {\em Tracial limit of $C^*$-algebras}, Acta Mathematica Sinica, English Series  \textbf{19} (2003), 535¨C556.


\bibitem{Izu} M. Izumi, {\em Finite group actions on ${C^*}$-algebras with Rohlin property. I}, Duke Math. J., \textbf{122} (2004),
233--280.


\bibitem{K22} E. Kirchberg and M. R{\o}rdam, {\em  Central sequence ${C^*}$-algebras and tensorial absorption of the Jiang-Su algebra}, J. Reine Angew. Math., \textbf{695} (2014),  175--214.


\bibitem{K3}A. Kishimoto, {\em The Rohlin property for shifts on UHF algebras}, J. Reine Angew. Math., \textbf{465} (1995), 183--196.


\bibitem{L0} H. Lin,  {\em Tracially AF ${C^*}$-algelbras},
 Trans. Amer. Math. Soc.,  \textbf{353} (2001),  683--722.

\bibitem{L1} H. Lin,  {\em The tracial topological rank of ${C^*}$-algebras},
 Proc. London Math. Soc.,  \textbf{83} (2001),  199--234.

\bibitem{L2} H. Lin, {\em An introduction to the classification of amenable ${C^*}$-algebras}, World Scientific, New Jersey, London, Singapore, Hong Kong, 2001.

\bibitem{OP} H. Osaka and N. C. Phillips, {\em Stable and real rank
for crossed products by automorphisms with the tracial Rokhlin
property}, Ergodic Theory Dynam. Systems,  \textbf{26} (2006),
1579--1621.


\bibitem{OP1} H. Osaka and N. C. Phillips, {\em Crossed products by finite group actions with the Rokhlin property},  Math. Z.,  \textbf{270} (2012),
19--42.


\bibitem{OMT} E. Ortega, M. R{\o}rdam, and H. Thiel,  {\em The Cuntz semigroup and comparison of open projections}, J. Funct. Anal.,  \textbf{260} (2011), 3474--3493.

\bibitem{PZ} C. Peligrad and L. Zsido, {\em Open projections of ${C^*}$-algebras: comparison and regularity}, Operator Theoretical Methods, Timisoara, 1998, Theta Foundation Bucharest, 2000.



\bibitem{P2} N. C. Phillips, {\em The tracial Rokhlin property for actions of finite groups on ${C^*}$-algebras},  Amer. J. Math.,  \textbf{133} (2011), 581--636.

\bibitem{P3} N. C. Phillips, {\em  Large subalgebras},  arXiv: 1408. 5546v2.

\bibitem{R11}M. R{\o}rdam, {\em Classification of certain infinite
simple ${C^*}$-algebras}, J. Funct. Anal.,  \textbf{131} (1995), 415--458.

\bibitem{RW} M. R{\o}rdam and  W. Winter, {\em The Jiang-Su algebra revisited}, J. Reine Angew. Math., \textbf{642} (2010), 129--155.

\bibitem{TH} H. Thiel, {\em  Ranks of operators in simple ${C^*}$-algebras with stable rank
one}. Commun. Math. Phys., \textbf{377} (2020), 37--76.


\bibitem{TWW1}A. Tikuisis, S. White, and W. Winter, {\em  Quasidiagonality of nuclear ${C^*}$-algebras}, Ann. Math., \textbf{185} (2017), 229--284.
 \bibitem{WW3} W. Winter and J. Zacharias,  {\em  The nuclear dimension of $C^*$-algebra}, Adv. math., \textbf{224} (2010), 461--498.


\end{thebibliography}
\end{document}